\newcommand{\pbmod}{\textup{\,mod }}
\newcommand{\fgr}{\textup{FG}}
\title{The Identity Correspondence Problem \\ and its Applications}
\author{Paul C. Bell, Igor Potapov\thanks{
Department of Computer Science, The University of Liverpool,
Email: p.c.bell@liverpool.ac.uk (P. Bell), potapov@liverpool.ac.uk (I. Potapov)}}
\date{}
\newtheorem{theorem}{Theorem}%[section]
\newtheorem{corollary}[theorem]{Corollary}
\newtheorem{lemma}[theorem]{Lemma}
\newtheorem{prob}[theorem]{Problem}
\newtheorem{definition}[theorem]{Definition}
\begin{document}
\maketitle

\begin{abstract}
In this paper we study several closely related fundamental problems
for words and matrices. First, we introduce the Identity
Correspondence Problem (ICP): whether a finite set of pairs of words
(over a group alphabet) can generate an identity pair by a sequence
of concatenations. We prove that ICP is undecidable by a reduction
of Post's Correspondence Problem via several new encoding
techniques. In the second part of the paper we use ICP to answer a
long standing open problem concerning matrix semigroups: ``Is it
decidable for a finitely generated semigroup $S$ of integral square 
matrices whether or not the identity matrix belongs to $S$?''. We
show that the problem is undecidable starting from dimension four
even when the number of matrices in the generator is $48$. From this
fact, we can immediately derive that the fundamental problem of
whether a finite set of matrices generates a group is also
undecidable. We also answer several questions for matrices over
different number fields. Apart from the application to matrix
problems, we believe that the Identity Correspondence Problem will
also be useful in identifying new areas of undecidable problems in
abstract algebra, computational
questions in logic and combinatorics on words. \\

{\bf Keywords:} Combinatorics on Words, Group problem, Post's
Correspondence Problem, Matrix Semigroups, Undecidability.
\end{abstract}

\section{Introduction}

Combinatorics on words has strong connections to several areas of
mathematics and computing. It is well known that words are very
suitable objects to formulate fundamental properties of
computations. One such property that may be formulated in terms of
operations on words is the exceptional concept of undecidability. A
problem is called undecidable if there exists no algorithm that can
solve it.  A famous example is Post's Correspondence Problem (PCP)
originally proved undecidable by Emil Post in 1946 \cite{Po46}. It plays a central
role in computer science due to its applicability for showing the
undecidability of many computational problems in a very natural and
simple way.

There are surprisingly many easily defined problems whose
decidability status is still open.  In some cases we believe that an
algorithm solving the problem may exist, but finding it would
require the solution to fundamental open problems in mathematics.
For other problems, the current tools for showing undecidability are
not directly applicable and new techniques need to be invented to
explore the border between decidable and undecidable problems.

%In the spirit of Post's Correspondence Problem, in this paper, we
%introduce the Identity Correspondence Problem (ICP): whether a
In this paper, we introduce the Identity Correspondence Problem (ICP)
in the spirit of Post's Correspondence Problem : whether a
finite set of pairs of words (over a group alphabet) can generate an
identity pair by a sequence of concatenations. We prove that ICP is
undecidable by a reduction of Post's Correspondence Problem via
several new encoding techniques that are used to guarantee the
existence of an identity pair only in the case of a correct solution
existing for the PCP instance.  It is our belief that the 
Identity Correspondence Problem may be useful in identifying new areas 
of undecidable problems related to computational questions in abstract 
algebra, logic and combinatorics on words.

In the second part of the paper, we use the Identity Correspondence
Problem to answer several long standing open problems concerning
matrix semigroups \cite{BCK04}. Taking products of matrices is one of the
fundamental operations in mathematics. However, many computational
problems related to the analysis of matrix products are
algorithmically hard and even undecidable. Among the oldest results
is a remarkable paper by M.~Paterson, where he shows that it is
undecidable whether the multiplicative semigroup generated by a
finite set of $3 \times 3$ integer matrices contains the zero matrix
(also known as the mortality problem), see \cite{Pa70}. Since then,
many results were obtained about checking the freeness, boundedness
and finiteness of matrix semigroups and the decidability of
different reachability questions such as the membership problem,
vector reachability, scalar reachability etc. See
\cite{BP05,BP08,BP08-SOF,BP08quat,BT00,CHK99,CK05,HHH06} 
for several related decidability results.

The membership problem asks whether a particular matrix is contained
within a given semigroup. The membership problem is undecidable for
$3 \times 3$ integral matrix semigroups due to Paterson's results
and also for the special linear group SL($4$, $\mathbb{Z}$) of $4 \times 4$ integer
matrices of determinant $1$, shown by Mikhailova \cite{Mi58}.

Another important problem in matrix semigroups is the Identity
Problem: Decide whether a finitely generated integral matrix semigroup
contains the identity matrix.  The Identity Problem is equivalent to
%the following Group Problem: given a finitely generated matrix semigroup
%$S \subseteq \mathbb{Z}^{n \times n}$, decide whether a
%subset of the generator of $S$ generates a non-trivial matrix group. In general
the following Group Problem: given a finitely generated semigroup
$S$, decide whether a subset of the generator of $S$ generates a non-trivial group. 
In general, it is undecidable whether or not the monoid described by a given
finite representation is a group. However, this decision problem is
reducible to a very restricted form of the uniform word problem and
it does not immediately imply that the Group Problem in finitely generated 
semigroups (without a set of relations) is undecidable \cite{O86}. 

The question about the membership of the identity matrix for matrix
semigroups is a well known open problem and was recently stated in
``Unsolved Problems in Mathematical Systems and Control Theory'',
\cite{BCK04} and also as Problem~5 in \cite{Har09}. The embedding 
methods used to show undecidability in other results do not appear 
to work here \cite{BCK04}. As far as we know, only two decidability 
results are known for the Identity Problem.  Very recently the first 
general decidability result for this problem was proved in the case 
of  $2 \times 2$ integral matrix semigroups, see \cite{CK05}. It is 
also known that in the special case of commutative matrix semigroups, 
the problem is decidable in any dimension \cite{BB96}.

In this paper we apply ICP to answer the long standing open problem:
``Is it decidable for a finitely generated semigroup $S$ of square
integral matrices whether or not the identity matrix belongs to
$S$?''. We show that the Identity Problem is undecidable starting
from dimension four even when the number of matrices in the
generator is fixed. In other words, we can define a class of finite
sets $\{M_1,M_2,\ldots ,M_k \}$ of four dimensional matrices such
that there is no algorithm to determine whether or not the identity
matrix can be represented as a product of these matrices. From this
fact, we can immediately derive that the fundamental problem of
whether a finite set of $4 \times 4$ matrices generates a group is
also undecidable. In our proofs we use the fact that free groups can
be embedded into the multiplicative group of $2 \times 2$ integral
matrices. This allows us to transfer the undecidability of ICP into
undecidability results on matrices.

We also provide a number of other corollaries. In particular, the
Identity and Group problems are undecidable for double quaternions
and a set of rotations on the $3$-sphere. Therefore, there is no
algorithm to check whether a set of linear transformations or a set
of rotations in dimension $4$ is reversible. Also, the question of
whether any diagonal matrix can be generated by a $4 \times 4$
integral matrix semigroup is undecidable.

\section{Identity Correspondence Problem}

{\bf Notation:} Given an alphabet $\Sigma = \{a,b\}$, we denote the
concatenation of two letters $x, y \in \Sigma$ by $xy$ or $x \cdot
y$. A \emph{word} over $\Sigma$ is a concatenation of letters from
alphabet $\Sigma$, i.e., $w = w_1w_2 \cdots w_k \in \Sigma^*$. We
denote throughout the paper the \emph{empty word} (or identify element)
by $\varepsilon$. We shall denote a \emph{pair of words} by either 
$(w_1, w_2)$ or $\frac{w_1}{w_2}$.

The free group over a generating set $H$ is denoted by $\fgr(H)$,
i.e., the free group over two elements $a$ and $b$ is denoted as
$\fgr(\{a,b\})$. %, we shall also write this as $\fgr(a,b)$ by abuse of notation. 
For example, the elements of $\fgr(\{a,b\})$ are all the words
over the alphabet $\{a,b,a^{-1},b^{-1}\}$ that are reduced, i.e.,
that contain no subword of the form $x \cdot x^{-1}$ or $x^{-1}
\cdot x$ (for $x \in \{a,b\}$). Note that $x \cdot x^{-1} = x^{-1} \cdot x = \varepsilon$.

%\begin{prob}\label{IDPCP_prob} Identity Correspondence Problem (ICP) -
%Given a finite set of pairs of words $\Pi = \{(u_1, v_1), (u_2,
%v_2), \ldots, (u_m, v_m)\} \subset \fgr(\Sigma) \times \fgr(\Sigma)$ \pb{where 
%$\Sigma = \{a,b\}$ is a binary alphabet}. Does
%the equation $u_{s_1}u_{s_2}\cdots u_{s_k} = v_{s_1}v_{s_2} \cdots
%v_{s_k} = \varepsilon$ hold for any nonempty finite sequence of
%indices $s = (s_1, s_2, \ldots, s_k)$, where $\varepsilon$ is the
%empty word (identity)?
%\end{prob}

%\begin{prob}\label{IDPCP_prob} Identity Correspondence Problem (ICP) -
%Given a finite set of pairs of words $\Pi = \{(s_1, t_1), (s_2,
%t_2), \ldots, (s_m, t_m)\} \subseteq \fgr(\Sigma)^* \times \fgr(\Sigma)^*$ where 
%$\Sigma = \{a,b\}$ is a binary alphabet. Does
%the equation $s_{l_1}s_{l_2}\cdots s_{l_k} = t_{l_1}t_{l_2} \cdots
%t_{l_k} = \varepsilon$ hold for any nonempty finite sequence of
%indices $l_1, l_2, \ldots, l_k$ with each $1 \leq l_i \leq m$, where $\varepsilon$ is the
%empty word (identity)?
%\end{prob}

\begin{prob}\label{IDPCP_prob} Identity Correspondence Problem (ICP) -
Let $\Sigma = \{a,b\}$ be a binary alphabet and 
$$\Pi = \{(s_1, t_1), (s_2, t_2), \ldots, (s_m, t_m)\} \subseteq \fgr(\Sigma) \times \fgr(\Sigma).
$$ 
%Is it decidable to 
Determine if there exists a nonempty finite sequence of
indices $l_1, l_2, \ldots, l_k$ where $1 \leq l_i \leq m$ such that 
$$
s_{l_1}s_{l_2}\cdots s_{l_k} = t_{l_1}t_{l_2} \cdots t_{l_k} = \varepsilon,
$$
where $\varepsilon$ is the empty word (identity).
\end{prob}

A first step towards the proof of undecidability of
Problem~\ref{IDPCP_prob} was shown in \cite{BP05} where the
following theorem was presented (although in a different form).

%\newpage

\begin{theorem}\label{ICPCP}\cite{BP05} - Index Coding PCP -
Let $\Sigma = \{a,b\}$ be a binary alphabet and
$$X = \{(s_1, t_1),(s_2, t_2), \ldots, (s_f, t_f)\} \subseteq \fgr(\Sigma) \times \fgr(\Sigma).$$
It is undecidable to determine if there exists a finite sequence
$l_1, l_2, \ldots, l_k$ where $1 \leq l_i \leq f$ and \emph{exactly
one} $l_i = f$ such that
$$
s_{l_1} s_{l_2} \cdots s_{l_k} = t_{l_1} t_{l_2} \cdots t_{l_k} =
\varepsilon.
$$
\end{theorem}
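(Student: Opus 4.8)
The plan is to reduce from the standard Post Correspondence Problem (PCP), which is undecidable over a binary alphabet. Given a PCP instance $\{(u_1,v_1),\ldots,(u_n,v_n)\}$ with $u_i,v_i \in \{c,d\}^*$, I first want to encode words over $\{c,d\}$ as elements of the free group $\fgr(\{a,b\})$ via an injective morphism (for instance, sending $c \mapsto a$, $d \mapsto b^{-1}ab$ or some similar code that keeps images reduced and freely independent), so that a free-group equality forces a genuine word equality; this lets me work inside $\fgr(\Sigma)$ while retaining the combinatorial content of PCP. The target is to manufacture a set $X$ of pairs together with a distinguished ``start/stop'' pair, call it $(s_f,t_f)$, so that producing the identity pair $(\varepsilon,\varepsilon)$ using the index $f$ \emph{exactly once} is possible precisely when the PCP instance has a solution.

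The main construction step is to design the distinguished pair and the ``letter'' pairs so that, reading a product from left to right, the only way to cancel everything down to $\varepsilon$ is to (i) use the distinguished pair to open a ``bracket'', (ii) spell out a candidate PCP solution word $u_{i_1}\cdots u_{i_r}$ in the first coordinate and $v_{i_1}\cdots v_{i_r}$ in the second, each wrapped by suitable prefix/suffix markers drawn from a disjoint block of generators, and (iii) close the bracket. Concretely I would enrich the alphabet with borders: use a fresh letter (or a fixed short word in $a,b$) as a left delimiter $\sharp$ and another as a right delimiter $\flat$, set $s_f = \sharp^{-1}$ and $t_f = \flat^{-1}$ (or their inverses, depending on orientation), and for each PCP pair $(u_i,v_i)$ introduce a pair $(s_i,t_i)$ whose first component is $\alpha(u_i)$ sandwiched appropriately and whose second is $\alpha(v_i)$, plus one initialization pair carrying $\sharp$ in the first coordinate and $\flat$ in the second, so that concatenation telescopes. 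Balancing the delimiter letters forces the index $f$ to appear exactly once and forces the interior product to have empty interior in both coordinates, which is exactly $\alpha(u_{i_1}\cdots u_{i_r}) = \alpha(v_{i_1}\cdots v_{i_r}) = \varepsilon$ after removing the fixed wrapper — equivalently a PCP solution.

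The correctness argument then splits into the two usual directions. For soundness (a PCP solution yields an identity sequence) I would exhibit the explicit index sequence built from the PCP solution, conjugated/flanked by the initialization and the distinguished pair, and verify by direct cancellation that both coordinates reduce to $\varepsilon$ with $f$ used once. For completeness (an identity sequence with $f$ used exactly once yields a PCP solution) I would argue by a ``bracket-counting'' or prefix-analysis lemma: track the net exponent of the delimiter generator $\sharp$ (resp.\ $\flat$) across any prefix of the product; since only the initialization pair and the pair indexed $f$ change this count, and the total must return to zero exactly once through $f$, the sequence must have the shape [initialization][PCP-letter pairs][$f$], after which the free-group equation on the remaining generators is precisely the encoded PCP equation. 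Injectivity of $\alpha$ converts this back to an honest equality of $\{c,d\}$-words.

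I expect the main obstacle to be the ``exactly one $l_i = f$'' bookkeeping together with ruling out degenerate interleavings: one must ensure no clever reordering or repeated use of the non-distinguished pairs can accidentally cancel the delimiters or produce $\varepsilon$ without encoding a real solution, and that the empty sequence and trivial one-index solutions are excluded. This is where the careful choice of delimiter words (making them freely independent of the encoding block, and choosing their inverses so that partial cancellation is impossible except in the intended telescoping order) does the real work; getting a clean invariant — a homomorphism to $\mathbb{Z}$ counting delimiters, or a pumping-style argument on reduced forms — that makes the shape of every valid sequence rigid is the crux of the proof.
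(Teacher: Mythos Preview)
Your construction has a fundamental gap: as described, it does not reduce from PCP. You place $\alpha(u_i)$ in the first coordinate and $\alpha(v_i)$ in the second, wrap with delimiters $\sharp,\flat$, and let the distinguished pair strip those delimiters. Then both coordinates reduce to $\varepsilon$ exactly when $\alpha(u_{i_1}\cdots u_{i_r})=\varepsilon$ \emph{and} $\alpha(v_{i_1}\cdots v_{i_r})=\varepsilon$, i.e.\ when both concatenations are the empty word. But a PCP solution only requires $u_{i_1}\cdots u_{i_r}=v_{i_1}\cdots v_{i_r}$; there is no reason this common word should be empty. So your soundness direction fails: a PCP instance can have a solution while your ICP instance has none. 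Your delimiter-counting invariant is fine for forcing the shape of the sequence, but the shape you force encodes the wrong equation.

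The paper's argument (sketched in its appendix and in Lemma~\ref{SecondWordsEncoding}) fixes exactly this by an asymmetric use of the two coordinates. The \emph{first} coordinate carries both $u_i$-blocks and $v_i^{-1}$-blocks (together with a single ``turnaround'' block $u_n v_n^{-1}$), so that the first word equals $\varepsilon$ iff $u_{i_1}\cdots u_{i_t}\,(v_{i_1}\cdots v_{i_t})^{-1}=\varepsilon$, i.e.\ iff the two PCP words are equal. The \emph{second} coordinate is then devoted entirely to index bookkeeping: the $u_i$-pair carries $a^ib$, the $v_i^{-1}$-pair carries $a^{-i}b^{-1}$, the turnaround carries $b^{-1}$, and the distinguished pair carries $b$. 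A careful reduced-word analysis shows the second coordinate can vanish only if the index sequence is palindromic around the turnaround, which forces the $v^{-1}$-indices to mirror the $u$-indices. The missing idea in your proposal is precisely this ``$uv^{-1}=\varepsilon$'' trick that converts an equality test into an identity test, together with dedicating the second coordinate to enforcing the palindrome rather than to carrying the $v$-word itself.
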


Unfortunately, Theorem~\ref{ICPCP} cannot be directly used to prove the Identity
Problem or the Group Problem are undecidable. We may, however, immediately use
Problem~\ref{IDPCP_prob} for this purpose (and do so in Section~\ref{app_sec}) once we 
have proved that it is undecidable.

The reason Theorem~\ref{ICPCP} does not prove
Problem~\ref{IDPCP_prob} is undecidable is the restriction that the
final pair of words $(s_f, t_f)$ is used exactly one time. Despite
many attempts, it is not clear how one may remove this restriction
in the construction of the proof, since it is essential that this
pair be used once to avoid the pathological case of several
incorrect solutions cancelling with each other and producing an identity element.

The main idea of this paper is to show a new non-trivial encoding
which contains the encoding used in Theorem~\ref{ICPCP} but avoids
the requirement that a specific element be used one time. The idea
is that by encoding the set $X$ \emph{four times} using four
different alphabets and adding `borders' to each pair of words such
that for cancellation to occur, each of these alphabets must be used
in a specific (cyclic) order, any incorrect solutions using a single alphabet
will not be able to be cancelled later on. More details of this encoding with four alphabets
will be given later, in Lemmas~\ref{oneCycle}, \ref{lem_concat} and \ref{arbProduct} and the example 
that follows them, which provides some intuition as to why three alphabets is not sufficient in the encoding.

We shall reduce a restricted form of Post's Correspondence Problem
(PCP) \cite{HHH06} to the Identity Correspondence Problem in a
constructive way. We shall require the following theorem:

\begin{theorem}\label{GPCP} \cite{HHH06,Mat96} Restricted PCP - Let $\Sigma = \{a,b\}$ be a binary alphabet and
$$P = \{(u_1, v_1),(u_2, v_2), \ldots, (u_n, v_n)\} \subseteq \Sigma^* \times \Sigma^*$$
be a set of pairs of words where $n \geq 3$. It is undecidable to determine if there
exists a finite sequence of indices $l_1, l_2, \ldots, l_k$ with each $2 \leq l_i \leq n-1$ such that:
$$
u_1 u_{l_1} u_{l_2} \cdots u_{l_k} u_n = v_1 v_{l_1} v_{l_2} \cdots
v_{l_k} v_n.
$$
This result holds even for $n = 7$.
\end{theorem}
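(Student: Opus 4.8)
The plan is to derive Theorem~\ref{GPCP} by reduction from the ordinary Post Correspondence Problem, whose undecidability is classical \cite{Po46}: there one is given pairs $(x_1,y_1),\dots,(x_r,y_r)$ over a binary alphabet and asks for a nonempty sequence $i_1,\dots,i_k$ with $x_{i_1}\cdots x_{i_k}=y_{i_1}\cdots y_{i_k}$. The only difference with the restricted form in the statement is that a solution must \emph{begin} with the index $1$, \emph{end} with the index $n$, and use only $\{2,\dots,n-1\}$ in between; hence the whole task is to force these two boundary conditions while ruling out any re-use of the distinguished first and last pairs.

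First I would force the first and last indices by a standard \emph{marker} construction. Introduce a fresh separator symbol $\#$; given a word $w=a_1a_2\cdots a_p$ let $w^{+}$ denote $w$ with a copy of $\#$ inserted after each letter and ${}^{+}w$ denote $w$ with a copy of $\#$ inserted before each letter. Replace each original pair $(x_i,y_i)$ by $(x_i^{+},{}^{+}y_i)$, add one \emph{initial} pair obtained from $(x_1,y_1)$ by carrying an extra leading $\#$ on one side, and one \emph{terminal} pair obtained from $(x_r,y_r)$ by carrying a second fresh symbol on one side. A short case analysis on the leading and trailing symbols of the concatenated top and bottom strings then shows that every solution of the new instance must use the initial pair exactly once at the start and the terminal pair exactly once at the end, can never re-use them elsewhere, and, after deleting the auxiliary symbols, projects onto a solution of the original instance — and conversely. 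This already yields undecidability of the restricted problem for \emph{some} fixed (but a priori large) number of pairs over a slightly larger alphabet, after which a fixed-length block code recodes everything back to the binary alphabet $\Sigma=\{a,b\}$, which merely rescales words and preserves all the required equalities; renaming the initial pair to index $1$ and the terminal pair to index $n$ puts the instance in exactly the shape of Theorem~\ref{GPCP}.

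The genuinely hard part — and the one I would delegate to the cited literature — is the sharp bound $n=7$: the padding above increases the number of pairs, so it does not by itself control the count. To reach $7$ one must encode computations directly, starting from the undecidability of the word problem for semi-Thue (string-rewriting) systems with only $3$ rules, due to Matiyasevich and Sénizergues \cite{Mat96}, and translating each such system into a PCP-style instance with $7$ pairs whose solutions mimic rewriting derivations; that translation is designed so that its output is already in the marked, boundary-fixed form, which is how \cite{HHH06} states the restricted $7$-pair version used here. So for the numerical bound I would simply invoke \cite{Mat96} together with the presentation in \cite{HHH06}. I expect the marker bookkeeping of the reduction sketched above to be routine, and the descent to exactly $7$ pairs to be the real obstacle — precisely the content that the cited papers supply.
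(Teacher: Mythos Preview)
The paper does not supply its own proof of Theorem~\ref{GPCP}; it simply cites \cite{HHH06,Mat96} and uses the result as a black box. Your proposal is therefore not competing with an in-paper argument but rather sketching the content of those references.

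That said, your outline is essentially the standard Claus-instance construction underlying \cite{HHH06}, and you correctly identify that the bound $n=7$ is the nontrivial ingredient coming from the $3$-rule semi-Thue result of \cite{Mat96}. One small imprecision worth flagging: in your marker step you build the distinguished initial and terminal pairs ``from $(x_1,y_1)$'' and ``from $(x_r,y_r)$'' respectively, but an arbitrary PCP solution need not begin with index $1$ nor end with index $r$. The usual repair is to make the initial and terminal pairs \emph{neutral} (encoding no original tile, only the boundary markers), so that any original solution lifts regardless of which index it starts or ends with; this keeps the pair count at $r+2$ rather than forcing you to add a start-variant for every index. With that adjustment your sketch is sound, and since you already defer the sharp $7$-pair bound to the cited literature, the proposal matches what the paper relies on.
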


We now show the reduction of an instance of the Restricted Post's
Correspondence Problem of Theorem~\ref{GPCP} to an instance of the
Identity Correspondence Problem. Let here and throughout $\Sigma =
\{a,b\}$ and define new alphabets $\Gamma_i = \{a_i,b_i\}$ for
$1 \leq i \leq 4$ and $\Gamma_B = \{x_j| 1 \leq j \leq 8 \}$
such that the alphabets are distinct (specifically, the intersection
of the free groups generated by any two different alphabets equals
$\{\varepsilon\}$). Let us define mappings ${\delta}_i :
\fgr(\Sigma) \to \fgr(\Gamma_i)$ by ${\delta}_i (a) = a_i$,
${\delta}_i (b) = b_i$, ${\delta}_i (a^{-1}) = a^{-1}_i$ and
${\delta}_i (b^{-1}) = b^{-1}_i$ for $1 \leq i \leq 4$. Note
that each $\delta_i$ is a homomorphism that may be applied to words
over $\fgr(\Sigma)$ in a natural way.

Let $\Gamma = \Gamma_1 \cup \Gamma_2 \cup \Gamma_3 \cup \Gamma_4
\cup \Gamma_B$. Define $\phi_i: \mathbb{Z}^+ \to \{a_i,b_i\}^*$ by
$\phi_i(j) = a_i^jb_i$. Similarly, let $\psi_i: \mathbb{Z}^+ \to
\{a_i^{-1},b_i^{-1}\}^*$ be defined by $\psi_i(j) =
(a_i^{-1})^jb_i^{-1}$. These morphisms will be used to ensure a
product is in a specific order. As an example of these morphisms we see that
$\phi_2(3) = a_2a_2a_2b_2$ and $\psi_3(2) = a_3^{-1}a_3^{-1}b_3^{-1}$.

Let $P = \{(u_1, v_1),(u_2, v_2), \ldots, (u_n, v_n)\} \subseteq
\Sigma^* \times \Sigma^*$ be a given Restricted PCP instance. We shall define an
instance of ICP consisting of a set of %$48$ pairs of words for $n=7$:
$8(n-1)$ pairs of words:
$$W = W_0 \cup W_1 \cup \ldots  \cup W_{15} \subseteq \fgr(\Gamma) \times \fgr(\Gamma)$$
$$
\begin{array}{rll}  &
W_0 = \Big\{\frac{x_8}{x_8}\cdot \frac{v_{11}^{-1}u_{11}}{b_1}\cdot
\frac{x_1^{-1}}{x_1^{-1}}\Big\}, & W_1 = \Big\{\frac{x_1}{x_1}\cdot
\frac{u_{1j}}{\phi_1(j)}\cdot \frac{x_1^{-1}}{x_1^{-1}}| 2 \leq j
\leq n-1 \Big\}, \\ & W_2 = \Big\{\frac{x_1}{x_1}\cdot
\frac{u_{1n}v_{1n}^{-1}}{b_1^{-1}}\cdot\frac{x_2^{-1}}{x_2^{-1}}\Big\},
& W_3 = \Big\{\frac{x_2}{x_2}\cdot
\frac{v_{1j}^{-1}}{\psi_1(j)}\cdot\frac{x_2^{-1}}{x_2^{-1}} | 2 \leq
j \leq n-1 \Big\}, \\ & W_4 = \Big\{\frac{x_2}{x_2}\cdot
\frac{v_{21}^{-1}u_{21}}{b_2}\cdot\frac{x_3^{-1}}{x_3^{-1}}\Big\}, &
W_5 = \Big\{\frac{x_3}{x_3}\cdot
\frac{u_{2j}}{\phi_2(j)}\cdot\frac{x_3^{-1}}{x_3^{-1}}| 2 \leq j
\leq n-1 \Big\}, \\ & W_6 = \Big\{\frac{x_3}{x_3}\cdot
\frac{u_{2n}v_{2n}^{-1}}{b_2^{-1}}\cdot\frac{x_4^{-1}}{x_4^{-1}}\Big\},
& W_7 = \Big\{\frac{x_4}{x_4}\cdot
\frac{v_{2j}^{-1}}{\psi_2(j)}\cdot\frac{x_4^{-1}}{x_4^{-1}} | 2 \leq
j \leq n-1 \Big\}, \\ & W_8 = \Big\{\frac{x_4}{x_4}\cdot
\frac{v_{31}^{-1}u_{31}}{b_3}\cdot\frac{x_5^{-1}}{x_5^{-1}}\Big\}, &
W_9 = \Big\{\frac{x_5}{x_5}\cdot
\frac{u_{3j}}{\phi_3(j)}\cdot\frac{x_5^{-1}}{x_5^{-1}}| 2 \leq j
\leq n-1 \Big\},\\ & W_{10} = \Big\{\frac{x_5}{x_5}\cdot
\frac{u_{3n}v_{3n}^{-1}}{b_3^{-1}}\cdot\frac{x_6^{-1}}{x_6^{-1}}\Big\},
& W_{11} = \Big\{\frac{x_6}{x_6}\cdot
\frac{v_{3j}^{-1}}{\psi_3(j)}\cdot\frac{x_6^{-1}}{x_6^{-1}} | 2 \leq
j \leq n-1 \Big\}, \\ & W_{12} = \Big\{\frac{x_6}{x_6}\cdot
\frac{v_{41}^{-1}u_{41}}{b_4}\cdot\frac{x_7^{-1}}{x_7^{-1}}\Big\}, &
W_{13} = \Big\{\frac{x_7}{x_7}\cdot
\frac{u_{4j}}{\phi_4(j)}\cdot\frac{x_7^{-1}}{x_7^{-1}}| 2 \leq j
\leq n-1 \Big\}, \\ & W_{14} = \Big\{\frac{x_7}{x_7}\cdot
\frac{u_{4n}v_{4n}^{-1}}{b_4^{-1}}\cdot\frac{x_8^{-1}}{x_8^{-1}}\Big\},
& W_{15} = \Big\{\frac{x_8}{x_8}\cdot
\frac{v_{4j}^{-1}}{\psi_4(j)}\cdot\frac{x_8^{-1}}{x_8^{-1}}| 2 \leq
j \leq n-1 \Big\},
\end{array}
$$
where $u_{ik} = {\delta}_i (u_k)$, $v_{ik} = {\delta}_i (v_k)$ for
$1 \leq k \leq n$ and $1 \leq i \leq 4$, thus $u_{ik} \in \{a_i, b_i\}^*$ and 
$v_{ik}^{-1} \in \{a_i^{-1}, b_i^{-1}\}^*$.
Given any two words $w_1,w_2 \in \fgr(\Gamma)$, recall that we
denote by $\frac{w_1}{w_2}$ the pair of words $(w_1,w_2) \in
\fgr(\Gamma) \times \fgr(\Gamma)$ in the above table.

\begin{figure}[h]
\begin{center}
\includegraphics[scale=0.50]{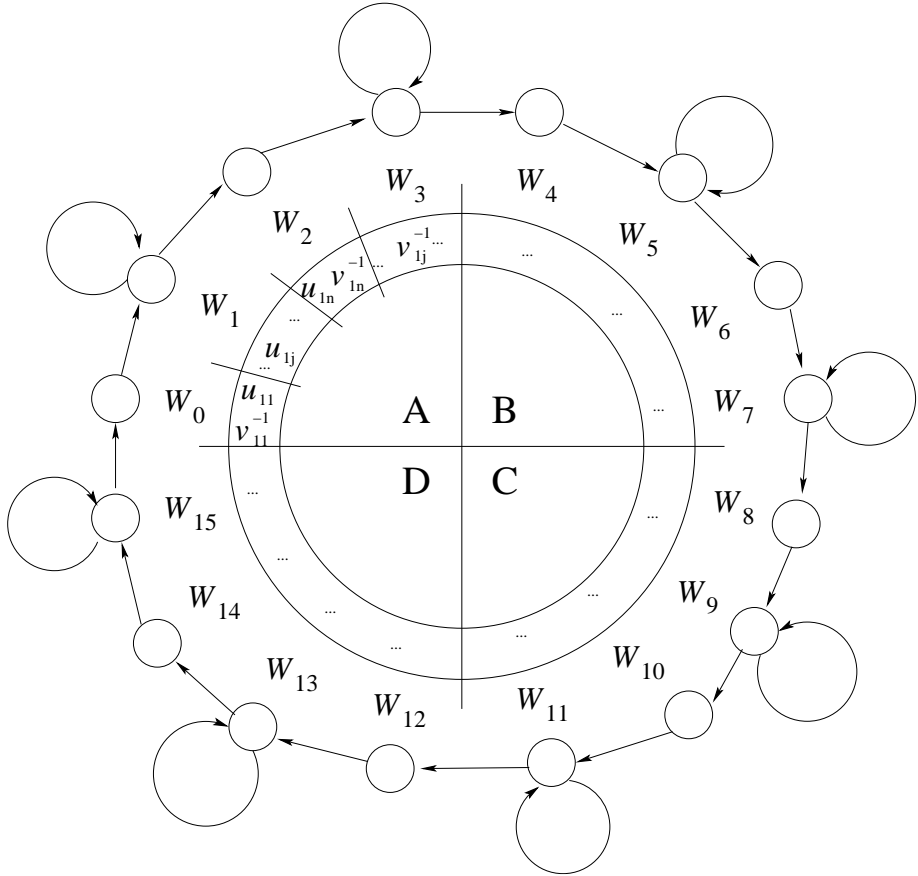}
\caption{The structure of a product which forms the identity.}\label{fig_1}
\end{center}
\end{figure}

Note that each word in each pair from $W_i$ has a so called
`border letter' on the left and right from 
$\fgr(\Gamma_B)$. These are used to restrict the type of sequence\footnote{ 
The only sequences that may lead to an identity pair should
be of the form of a cycle or a nested insertion of cycles as we shall show in Lemma~\ref{lem_cycles}.}
that can  lead to an identity pair.
The central element of each word (i.e. excluding the
`border letters') corresponds to particular words from $P$ and we
encode instance $P$ four times separately, first in
$W_0,W_1,W_2,W_3$, secondly in $W_4, W_5, W_6, W_7$ etc. using
different alphabets for each encoding \footnote{In the case of an incorrect solution for 
the Restricted PCP instance (i.e. an index sequence $i_1, \ldots, i_k$ such that 
$u_{i_1} \cdots u_{i_k} \neq v_{i_1} \cdots v_{i_k}$), the use of different alphabets for the four parts creates a sequence 
of non-empty parts that cannot be trivially cancelled from the left or right side.}. 
This may be seen in Figure~\ref{fig_1}, where $A,B,C$ and $D$ each separately encode
instance $P$. 

This forms the set $W = \{(s_1, t_1), (s_2, t_2), \ldots, (s_m, t_m)\} \subset 
\fgr(\Gamma) \times \fgr(\Gamma)$. Let us define the $s_i$-word to mean the first word
from pair of words $(s_i,t_i)$ and the $t_i$-word for the second word of this pair.
The $t_i$-words from each pair in $W$ use an encoding which ensures that the
set of $s_i$-words is concatenated in a particular order within
each $A,B,C$ and $D$ part. We show in Lemma~\ref{SecondWordsEncoding} that this encoding enforces a
correct encoding of the Restricted PCP instance $P$ within each part if that part
gets reduced to two letters in the second word (the first and
last `border letters'). We adapt here our recently 
introduced index encoding technique from \cite{BP05}.

One of the important encoding concepts is a \emph{cycle} of set $W$. 
We see that the first and last letters from each word of any pair of words 
from set $W_i \subset W$ only cancel with a pair of words from set 
$W_{i+1 \pbmod 16}$ for $0 \leq i \leq 15$ and with elements from $W_i$ 
itself if $i \pbmod 2 \equiv 1$. We shall now define a `cycle' of set $W$.
%\newpage 
\begin{definition}
%A cycle $w$ of $W$ is a word pair from $W^*$ of the following form:
An element $w \in W^*$ is called a cycle of $W$ if it is of the form:
\begin{eqnarray}
w = w_{i} \cdot w_{(i+1) \pbmod 16} \cdot \ldots \cdot w_{(i+15)
\pbmod 16} \in W^* \label{wForm1}
\end{eqnarray}
for some $i$: $0 \leq i \leq 15$, where $w_y \in W_y$ if $y \pbmod 2
\equiv 0$ and $w_y \in W_y^*$ if $y \pbmod 2 \equiv 1$.
\end{definition}

For example a cycle could use element $W_4$ followed by a product of
elements from $W_5$, then element $W_6$, followed by a product of
elements from $W_7$ etc. As previously mentioned, the idea of the
encoding is that a correct solution to the Restricted PCP instance $P$ will be
encoded \emph{four times} in a correct solution to $W$, in elements
from $\{W_0, \ldots, W_3\}, \{W_4, \ldots, W_7\}, \{W_8, \ldots,
W_{11}\}$ and $\{W_{12}, \ldots, W_{15}\}$ separately.

We now define a \emph{pattern generated by cycle insertions}. By this, we mean a product where cycles can be 
inserted within other cycles or appended to the end of them. For example, given an
element $q_1q_2q_3q_4q_3'q_1'q_5 \in W^*$ where $q_1q_1'$, $q_2$, $q_3q_3'$, $q_4$ and $q_5$ 
are all cycles, then this would form a pattern generated by cycle insertions since 
it can be decomposed into cycles being nested or concatenated in the required way.

\begin{lemma}\label{lem_cycles}
If instance $W$ of the Identity Correspondence Problem has a solution, it must be 
constructed by an element $w \in W^*$ which forms either a single {\sl cycle} or a 
pattern generated by cycle insertions (including concatenation).
\end{lemma}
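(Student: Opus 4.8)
The plan is to analyze the structure of the `border letters' from $\fgr(\Gamma_B)$ and show that any product giving the identity pair must have a bracketing structure corresponding to nested/concatenated cycles. First I would observe the key local fact already noted in the excerpt: the left border letter of any pair in $W_i$ is $x_{i'}$ where $i' = \lceil i/2 \rceil$ roughly (more precisely, reading off the table, the pairs in $W_i$ have left border $x_j$ and right border $x_{j+1}^{-1}$ or $x_j^{-1}$ following a fixed pattern), and crucially $x_j$ appears as a \emph{positive} border only on the left end of pairs in $W_{2j-2}, W_{2j-1}$ and as a \emph{negative} border $x_j^{-1}$ only on the right end of pairs in $W_{2j-3}, W_{2j-1}$ (indices mod 16). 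So in a reduced word over $\fgr(\Gamma)$ the border letters must cancel in matched pairs, $x_j \cdot x_j^{-1}$ or $x_j^{-1} \cdot x_j$, and each cancellation forces the pair immediately to its left and the pair immediately to its right to come from the prescribed adjacent blocks $W_i$, $W_{i+1 \bmod 16}$.

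The main step is then a matching/bracketing argument. Consider a solution $w = w_{l_1} \cdots w_{l_k} \in W^*$ with $s_{l_1}\cdots s_{l_k} = \varepsilon$ (the same argument applies to the $t$-coordinate, and I would use whichever is more convenient — actually the $s$-coordinate carries the border letters in both coordinates, so I use that). Since the concatenation freely reduces to $\varepsilon$, the border letters, which are in an alphabet disjoint from all $\Gamma_i$, must themselves reduce to $\varepsilon$ after the $\Gamma_i$-letters between them vanish; this gives a well-nested matching of border letters, i.e.\ a Dyck-like structure on the sequence $l_1,\dots,l_k$. I would prove by induction on $k$ (or on the depth of nesting) that such a well-nested sequence, together with the adjacency constraint forcing consecutive-block transitions $W_i \to W_{i+1 \bmod 16}$ across each matched border, decomposes uniquely into cycles that are either concatenated at top level or nested strictly inside a single $W_j$ block with $j$ odd (since by the definition of cycle, only odd-indexed blocks $W_j$ may be used $W_j^*$-style, allowing a sub-cycle to be inserted there while the border $x_{\lceil\cdot\rceil}$ is reused). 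The base case is a single full cycle $w_i w_{i+1}\cdots w_{i+15 \bmod 16}$; the inductive step peels off either an outermost cycle (concatenation case) or an innermost matched border pair whose enclosed content is itself, by induction, a pattern of cycle insertions sitting inside an odd block.

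The hard part will be handling the freely-reduced cancellations carefully: a priori the $\Gamma_i$-letters of a pair in $W_i$ could partially cancel against letters of a non-adjacent pair if the intervening material collapses, so I must argue that the border letters form an \emph{impenetrable} barrier — that a border letter $x_j$ cannot be cancelled by anything except a matching $x_j^{-1}$ arising as a border of an adjacent pair, and hence that the factorization into cycles respects the original pair boundaries and never ``cuts through'' a pair. This uses the disjointness of $\Gamma_B$ from the other alphabets (so border letters only cancel with border letters) plus the specific one-in/one-out incidence pattern of each $x_j$ among the blocks, which I would tabulate once from the definition of $W_0,\dots,W_{15}$ and then invoke. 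I would also need to rule out degenerate solutions using only pairs from a single odd block $W_j$ (pure $W_j^*$ with no surrounding cycle): here the border letters are all $x_{j'}$ and $x_{j'}^{-1}$ for one fixed $j'$, but the central $t$-words are nonempty powers of $a_i$'s and $b_i$'s that cannot cancel to $\varepsilon$ on their own — though I should be careful, since this last point is really where Lemma~\ref{SecondWordsEncoding} and the $\phi_i,\psi_i$ encodings do the work, so for the present lemma I only need the weaker claim that \emph{if} there is a solution then its border structure is a pattern of cycle insertions, deferring the ``correct PCP solution'' content to later lemmas.
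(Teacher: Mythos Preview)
Your approach is correct and essentially matches the paper's: both arguments exploit that the border letters from $\Gamma_B$ impose a state-transition structure (the paper phrases this via the clockwise traversal of Figure~\ref{fig_1}, you via a Dyck-style matching plus induction on nesting depth), and both rule out degenerate products from a single odd block $W_j^*$. On that last point you are over-cautious: Lemma~\ref{SecondWordsEncoding} is not needed here, since within any fixed odd-indexed $W_j$ the central words use only positive letters (e.g.\ $u_{ij} \in \{a_i,b_i\}^*$, $\phi_i(j)$) or only negative letters, so no internal cancellation is possible --- the paper dispatches this in one line, and you should too.
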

\begin{proof} 
It is not difficult to see that the border symbols from $\Gamma_B$ give us constraints on the 
type of patterns which can be considered as possible solutions to the ICP instance, 
i.e., which may have some form of word cancellation. These constraints can be considered as the 
state system represented in Figure~\ref{fig_1} and require that a cycle is completed in a clockwise 
direction. Note that for any $w \in W_y^*$ where $y \pbmod 2 \equiv 1$, it holds that $w$ is not 
equal to $(\varepsilon,\varepsilon)$ since the left and right borders of each word are separated by 
a nonempty word from an alphabet not containing inverse elements. Let us assume that we have a pair 
of words from $W_i$ for some $i$. The only possible way to cancel its border symbols is to complete 
a chain of cancellations by inverse border elements which will correspond to a clockwise traversal 
of states represented in Figure~\ref{fig_1}. Since at any time we can start to build a new cycle and 
all cycles must be completed we have that the only sequence of word pairs that can be equal to 
$(\varepsilon,\varepsilon)$ must be represented as a single cycle or a pattern generated by cycle 
insertions.
\end{proof}

\begin{definition}
For any product $Y \in W^*$ we shall denote by a \emph{decomposition
by parts} of $Y$, the decomposition $Y = Y_1 Y_2 \cdots Y_k$ where
for each $1 \leq i \leq k$, if $Y_i \subset \fgr(\Gamma_i \cup
\Gamma_B) \times \fgr(\Gamma_i \cup \Gamma_B)$ then $Y_{i+1}
\subset \fgr(\Gamma_j \cup \Gamma_B) \times \fgr(\Gamma_j \cup
\Gamma_B)$ where $1 \leq i,j \leq 4$ and $i\neq j$.
\end{definition}
For a cycle $Q$, the decomposition by parts of $Q$ clearly gives
either $4$ or $5$ parts in the decomposition. For example, we may
have $Q = X_1 X_2 X_3 X_4 X_5$ where $X_1 \in \fgr(\Gamma_i \cup
\Gamma_B) \times \fgr(\Gamma_i \cup \Gamma_B)$ and thus $X_2 \in
\fgr(\Gamma_{(i+1 \pbmod 4)} \cup \Gamma_B) \times
\fgr(\Gamma_{(i+1 \pbmod 4)} \cup \Gamma_B)$ etc. $X_5$ is
either empty or uses the same alphabet as $X_1$.

% -------------------------------------------------------------------------------------------------

The $s_i$-words from each $A,B,C,D$ part of
Figure~\ref{fig_1} will store all words from the instance of
Restricted PCP, $P$ separately using distinct alphabets. If we concatenate the $s_i$-words 
of one of these parts in the correct order and have the empty
word (excluding initial and final `border letters'), then this
corresponds to a solution of $P$. By a correct order, we mean that
if we have $u_{i1}u_{i2} \cdots u_{ik}$ for example, then they
should be concatenated with $(v_{i1}v_{i2} \cdots v_{ik})^{-1} =
v_{ik}^{-1} \cdots v_{i2}^{-1} v_{i1}^{-1}$. If the concatenation of
these words equals $\varepsilon$, then we have a correct solution to
$P$.

Let us here illustrate this fact with a simple example. Take a (standard)
PCP instance $P= \{(aab, a),(a,baa)\}$. Clearly we have a solution to this instance since
$(aab,a)(aab,a)(a,baa)(a,baa) = (aabaabaa, aabaabaa)$.
Using the above encoding, we can alternatively write this as:
$$
(aab)(aab)(a)(a) \,\cdot\, (baa)^{-1}(baa)^{-1}(a)^{-1}(a)^{-1} = \varepsilon,
$$
which can also be seen as a solution where the words on the left are from the first words of 
each pair in $P$ and the words on the right are the inverse of the second words from $P$. 
The idea is that on the right, using the inverse elements of the alphabet, we should have 
a palindrome of the word on the left and they should occur in the correct order. Here we 
used the sequence $1,1,2,2$ on the left thus used the reverse sequence on the right, namely $2,2,1,1$.

The encoding in the second words using $\phi_i, \psi_i$ and $\{b_i,
b_i^{-1} | 1 \leq i \leq 4\}$ is used to ensure that any solution to
$W$ \emph{must} use such a correct ordering in each $A,B,C,D$
part. The next lemma formalizes this concept and is a  modification of 
the technique presented in \cite{BP05}. It also can be seen as a variant of 
Index Coding PCP, see \cite{BP08-SOF}, which is simpler to prove.

\begin{lemma}\label{SecondWordsEncoding}
Given any part $X \in \fgr(\Gamma_j \cup
\Gamma_B) \times \fgr(\Gamma_j \cup \Gamma_B)$, if the second
word of $X$ consists of only the initial and final `border letters' 
$x_p x^{-1}_q \in \Gamma_B^*$, then the second word of $X$ must be of the form
$$ x_p \cdot b_j \phi_j(z_1) \phi_j(z_2) \cdots \phi_j(z_k) \cdot b_j^{-1} \cdot
\psi_j(z_k) \cdots \psi_j(z_2) \psi_j(z_1)   \cdot x^{-1}_q, 
$$
where $2 \leq z_1, z_2, \ldots, z_k \leq n-1$. 
(This corresponds to a `correct' palindromic encoding of the Restricted PCP instance $P$ within this part.
We see that all elements except $x_p$ and $x_q^{-1}$ will be cancelled.)
\end{lemma}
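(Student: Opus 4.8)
The plan is to analyze the structure of the second word of a part $X$ by tracking how the morphisms $\phi_j$ and $\psi_j$ — which always produce blocks of the form $a_j^\ell b_j$ or $a_j^{-\ell} b_j^{-1}$ — force a unique ordering of concatenation. First I would observe that the second word of any pair in the sets contributing to part $X$ (namely $W_{2t}, W_{2t+1}, W_{2t+2}, W_{2t+3}$ for the appropriate $t$ with alphabet $\Gamma_j$) has the shape $x_p \cdot m \cdot x_q^{-1}$ where $m$ is one of: a single $b_j$ (opening border-transition from $W_{4t}$ or similar, i.e. the $v^{-1}u/b_j$-type blocks), a single letter $\phi_j(z)=a_j^z b_j$ (from the $W$-set indexed by $u_{jz}/\phi_j(z)$), a single $b_j^{-1}$ (closing block of type $uv^{-1}/b_j^{-1}$), or a single $\psi_j(z)=a_j^{-z} b_j^{-1}$ (from the $\psi_j(z)$-type block). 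Reading the hypothesis that the whole second word of $X$ reduces to just $x_p x_q^{-1}$, the central part (all the $m$'s concatenated, after the internal border letters $x_r x_r^{-1}$ cancel, which they must by Lemma~\ref{lem_cycles}'s clockwise-traversal argument applied inside the part) must be freely equal to $\varepsilon$ over $\fgr(\Gamma_j)$.

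Next I would argue that this central word, call it $C \in \fgr(\Gamma_j)$, is a concatenation of blocks each lying in $\{b_j\} \cup \{a_j^z b_j : 2 \le z \le n-1\} \cup \{b_j^{-1}\} \cup \{a_j^{-z} b_j^{-1} : 2 \le z \le n-1\}$, and moreover the part structure (one traversal of the four sets with alphabet $\Gamma_j$ in cyclic order) forces the blocks to come in the order: first exactly one $b_j$-block, then a (possibly empty) run of $\phi_j$-blocks, then exactly one $b_j^{-1}$-block, then a (possibly empty) run of $\psi_j$-blocks. So $C = b_j \cdot \phi_j(z_1)\cdots\phi_j(z_k) \cdot b_j^{-1} \cdot \psi_j(z'_1)\cdots\psi_j(z'_{k'})$ and I must show $C = \varepsilon$ forces $k = k'$ and $z'_i = z_{k+1-i}$ for all $i$. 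The key observation is a "last $b_j$ cancels first" / counting argument: in $C$, every $b_j$ must eventually cancel with some $b_j^{-1}$ and the $a_j$-runs between consecutive $b_j^{\pm1}$'s record indices; because there are no inverse letters inside $\phi_j$-blocks (they use only $a_j,b_j$) and none inside $\psi_j$-blocks, cancellation can only begin at the unique interface where a $\phi_j$-block meets the $b_j^{-1}$-block or where $\psi_j$-blocks meet each other. Peeling from that interface: the rightmost $\phi_j(z_k)=a_j^{z_k}b_j$ meets $b_j^{-1}\psi_j(z'_1)\cdots$; for the $b_j$ at the end of $\phi_j(z_k)$ to cancel it must meet the $b_j^{-1}$, leaving $a_j^{z_k}$ abutting $\psi_j(z'_1)=a_j^{-z'_1}b_j^{-1}$, which forces $z_k = z'_1$ (since $a_j^{z_k} a_j^{-z'_1} = \varepsilon$ is needed before the next $b_j^{-1}$ can be reached — any mismatch leaves a nonzero power of $a_j$ trapped against a $b_j$ or $b_j^{-1}$ with no way to cancel). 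Iterating this peeling inward yields $z_{k-1}=z'_2$, etc., and finally consumes the initial $b_j$, giving $k=k'$ and the palindrome condition; relabeling $z_i$ appropriately gives exactly the claimed form $x_p \cdot b_j\phi_j(z_1)\cdots\phi_j(z_k)\cdot b_j^{-1}\cdot\psi_j(z_k)\cdots\psi_j(z_1)\cdot x_q^{-1}$.

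The main obstacle I expect is rigorously justifying the "no trapped $a_j$-power" claim, i.e. that cancellation in the free group $\fgr(\Gamma_j)$ really is forced to proceed in this strictly nested, interface-driven way rather than via some cleverer reduction order. The clean way to handle this is to set up an invariant on prefixes: process $C$ left to right and maintain the reduced form; show inductively that after the initial $b_j$ and the $\phi_j$-run the reduced word is $b_j a_j^{z_1} b_j a_j^{z_2} \cdots b_j a_j^{z_k} b_j$ (no cancellation possible in this phase because every letter is $a_j$ or $b_j$, never an inverse), then when the $b_j^{-1}$-block and $\psi_j$-run are appended, show each appended block can at best cancel the current suffix and leaves the reduced word strictly shorter only when the index matches, and equals $x_p x_q^{-1}$-free-part $\varepsilon$ only when the matching is the full palindrome — any other choice terminates with a nonempty reduced word, contradicting the hypothesis. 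This is essentially bookkeeping in a free group and, while a little tedious, is the heart of why the $\phi_j/\psi_j$ encoding enforces a genuine Restricted PCP solution; I would also note explicitly that the lower bound $z_i \ge 2$ comes from the Restricted PCP indices satisfying $2 \le l_i \le n-1$, so the $a_j$-runs are genuinely nonempty and index $1$ and $n$ blocks (which use $b_j$ or $b_j^{-1}$ alone, or $u_{1n}v_{1n}^{-1}$-type words) are structurally distinguished as the unique opening and closing blocks.
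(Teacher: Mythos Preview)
Your proposal is correct and follows essentially the same approach as the paper: use the border letters to force the order $W_0 \cdot W_1^* \cdot W_2 \cdot W_3^*$ within a part (hence the block pattern $b_j \cdot \phi_j(\cdot)^* \cdot b_j^{-1} \cdot \psi_j(\cdot)^*$), and then argue that free cancellation in $\fgr(\Gamma_j)$ can only proceed by the nested ``peeling'' that matches $z_k$ with $z'_1$, $z_{k-1}$ with $z'_2$, etc. Your treatment is in fact more explicit than the paper's, which sketches the same idea and defers details to \cite{BP05}; the left-to-right invariant you propose is exactly the right way to make the peeling rigorous. One small remark: your appeal to Lemma~\ref{lem_cycles} for the intra-part ordering is slightly misplaced---that lemma concerns the global cycle structure, whereas the ordering inside a part and the uniqueness of the $b_j$ and $b_j^{-1}$ blocks follow directly from the local border-letter constraints (e.g.\ $W_0$ has left border $x_8$, which no element of alphabet $\Gamma_1$ can precede, so $W_0$ occurs at most once and only at the start of a $\Gamma_1$-part); the paper makes this explicit in its final paragraph of the proof.
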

\begin{proof}
Since $X$ is a single part, we see that $(p,q) \in \{(8,2), (2,4), (4,6), (6,8) \}$ depending
on the type of part $X$.
Let us consider the case that $X$ is a product over elements from $W_0
\cup W_1 \cup W_2 \cup W_3$ (part A in Figure~\ref{fig_1}). The proof
for the other `parts', $B,C$ and $D$ is analogous. Consider the
morphisms used in the second words of these elements. If we have for
example a word starting with the element from $W_0$, by the choice
of `border letters', it must be followed by an element from $W_1^*$ or
$W_2$ for cancellation to occur. In the former case (excluding `border
letters') it will thus be of the form $b_1 a_1^{z_1}b_1 \cdot a_1^{z_2}b_1 \cdots
a_1^{z_k}b_1$ where each $2 \leq z_i \leq n-1$. 
The only way to cancel this final $b_1$ is to eventually
use $W_2$ (even if we use no element from $W_1^*$) since this is
the only element whose second word starts with $b_1^{-1}$ and this is the only
other element within $W$ cancelling the `border letter' of $W_1$.

After this we must use an element from $W_3^*$ to cancel the $a_1$
values since each $\psi(i)$ starts with $a_i^{-1}$. It is not
difficult to see that we in fact must use these elements in the
order $\psi(z_k) \cdot \psi(z_{k-1}) \cdots \psi(z_1)$
otherwise the `$b_1^{-1}$' at the end of some $\psi(z_j)$ will
not be cancelled on the left. The only way to cancel this
`$b_1^{-1}$' would be to use $W_1$ but this cannot follow $W_3$ by
the choice of `border letters'. With a correct sequence of $W_3$
elements, all the letters of the second words will cancel leaving
the empty word $\varepsilon$ (again excluding the `border letters').
If we do not use this sequence, by the choice of the morphisms
$\phi$ and $\psi$, the letters cannot be reduced to $\varepsilon$.
See \cite{BP05} for further details.

Finally note that if we do not start with the element from $W_0$
then, since the left `border letter' of the pair of words in this
element is $x_8$, we cannot use it to cancel the product later on,
since this border essentially splits the pair of words in two. It is
not difficult to see that without this element we cannot reduce a
product to $\varepsilon$ however since without the $b_i$ element in
the second word to cancel the last letter of $W_2$ or $W_3$
elements, they cannot be reduced. Thus we must have the given
structure given in the lemma. %\qed
\end{proof}

% -------------------------------------------------------------------------------------------------

\begin{lemma}\label{PCPSol}
If there exists a solution to the Restricted PCP instance $P$, then there
exists a solution to the Identity Correspondence Problem instance
$W$.
\end{lemma}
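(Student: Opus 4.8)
The plan is to take a solution of the Restricted PCP instance $P$ and build an explicit product in $W^*$ that evaluates to $(\varepsilon,\varepsilon)$. Suppose the sequence $l_1, l_2, \ldots, l_k$ with $2 \le l_i \le n-1$ satisfies $u_1 u_{l_1} \cdots u_{l_k} u_n = v_1 v_{l_1} \cdots v_{l_k} v_n$ in $\Sigma^*$. The construction should produce one cycle in the sense of the earlier definition, traversing the four encodings $A, B, C, D$ in order; by Lemma~\ref{lem_cycles} this is the only shape a solution can have, so a cycle is the natural candidate. Concretely, within part $A$ I would take, in order: the single element of $W_0$, then the elements of $W_1$ indexed by $l_1, l_2, \ldots, l_k$ (in that order), then the single element of $W_2$, then the elements of $W_3$ indexed by $l_k, l_{k-1}, \ldots, l_1$ (in \emph{reverse} order), and then the single element of $W_4$ which opens part $B$. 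Parts $B$, $C$, $D$ are formed identically using $W_4,\ldots,W_7$, then $W_8,\ldots,W_{11}$, then $W_{12},\ldots,W_{15}$, and the element of $W_0$ at the very start together with the element of $W_{15}$-to-$W_0$ wraparound closes the cycle so that all border letters from $\Gamma_B$ telescope to $\varepsilon$.

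Next I would verify the two coordinates separately. For the border letters, the chosen order is exactly a clockwise traversal of Figure~\ref{fig_1}, so every $x_j^{-1}$ produced on the right of one element meets the matching $x_j$ on the left of the next, and the whole chain of border letters cancels in both words; this is essentially bookkeeping. For the $s_i$-words (first coordinate) restricted to part $A$: stripping the borders, the product is $(v_{11}^{-1} u_{11}) \cdot u_{1,l_1} \cdots u_{1,l_k} \cdot (u_{1n} v_{1n}^{-1}) \cdot v_{1,l_k}^{-1} \cdots v_{1,l_1}^{-1}$. Here I would reorganize: the leading $v_{11}^{-1}$ and the trailing block $v_{1,l_k}^{-1}\cdots v_{1,l_1}^{-1} v_{1n}^{-1}$ (note $v_{1n}^{-1}$ comes from $W_2$) together form $(v_{1n} v_{1,l_1}\cdots v_{1,l_k} v_{11})^{-1}$ read in reverse — more carefully, I want to show the first word collapses to $\varepsilon$ precisely because $u_1 u_{l_1} \cdots u_{l_k} u_n = v_1 v_{l_1} \cdots v_{l_k} v_n$ after applying the homomorphism $\delta_1$. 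The key identity is that $\delta_1$ of the PCP equality gives $u_{11} u_{1,l_1} \cdots u_{1,l_k} u_{1n} = v_{11} v_{1,l_1} \cdots v_{1,l_k} v_{1n}$ in $\fgr(\Gamma_1)$, hence $v_{11}^{-1}(u_{11} u_{1,l_1}\cdots u_{1,l_k} u_{1n}) v_{1n}^{-1} = v_{11}^{-1} v_{11} v_{1,l_1}\cdots v_{1,l_k} = v_{1,l_1}\cdots v_{1,l_k}$, which is exactly cancelled by the subsequent $v_{1,l_k}^{-1}\cdots v_{1,l_1}^{-1}$. For the $t_i$-words in part $A$, stripping borders gives $b_1 \cdot \phi_1(l_1)\cdots\phi_1(l_k) \cdot b_1^{-1} \cdot \psi_1(l_k)\cdots\psi_1(l_1)$, which is $\varepsilon$ by direct expansion since $\psi_1(j) = (\phi_1(j))^{-1}$ up to the shared $b_1$; this is just the palindromic structure already described in Lemma~\ref{SecondWordsEncoding}, read in the other direction.

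Finally I would assemble the four parts. Each part $A, B, C, D$, with its flanking border letters, reduces the first word to a product of pure border letters and the second word likewise, and then the global border cancellation (clockwise cycle) finishes both coordinates. Because the four encodings use disjoint alphabets $\Gamma_1, \Gamma_2, \Gamma_3, \Gamma_4$, the internal cancellations in the four parts are completely independent and no interference can occur; the only coupling is through $\Gamma_B$, which telescopes by construction. I expect the main obstacle to be purely notational: getting the indexing of the border-letter cycle exactly right (which $W_i$ ends a cycle and how the $W_0$ element both starts and is reached at the close), and being careful that $W_1, W_3, W_5, \ldots$ are the sets one is allowed to iterate (the odd-indexed $W_i$) while $W_0, W_2, W_4, \ldots$ are singletons used once per cycle. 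No step is mathematically deep — the content is entirely in checking that the homomorphic image of the PCP solution makes the central words cancel — but the index sequence on the $\psi$-side must be the reverse of that on the $\phi$-side, and I would state this reversal explicitly and justify it by the palindrome argument of Lemma~\ref{SecondWordsEncoding} before concluding.
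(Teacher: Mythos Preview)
Your proposal is correct and follows exactly the same approach as the paper: construct the single cycle $w_0 w_1 \cdots w_{15}$ where the even-indexed singletons are used once, the $W_i$ with $i \equiv 1 \pmod 4$ are taken in the order $l_1,\ldots,l_k$, and the $W_i$ with $i \equiv 3 \pmod 4$ are taken in the reverse order $l_k,\ldots,l_1$. The paper dismisses the verification as ``a simple computation,'' whereas you actually carry it out; your cancellation arguments for both coordinates are correct (though note the remark that $\psi_1(j) = (\phi_1(j))^{-1}$ is not literally true---the telescoping works from the middle outward, as your palindrome description correctly indicates).
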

\begin{proof}
Assume we have a solution to $P$ with indices $2 \leq i_1, i_2,
\ldots, i_k \leq n-1$, i.e., $u_1u_{i_1} \cdots u_{i_k}u_n =
v_1v_{i_1} \cdots v_{i_k}v_n$.
We can explicitly define a product which will give a correct solution to the 
ICP instance $W$. Define a word $w = w_0w_1 \cdots w_{15} \in W^*$ such that each 
$w_i \in W_i^*$. If $i \pbmod 1 \equiv 0$, then $|w_i| = 1$. If $i \pbmod 4 \equiv 1$, then
$w_i$ will be chosen from $W_i^*$ using the indices $2 \leq i_1, i_2, \ldots, i_k \leq n-1$ for $j$.
Finally, if $i \pbmod 4 \equiv 3$, then $w_i$ will be chosen from $W_i^*$ using the indices 
$2 \leq i_k, i_{k-1}, \ldots, i_1 \leq n-1$ for $j$. A simple computation shows that since this sequence
gave a correct solution to $P$, then $w$ will be equal to $(\varepsilon, \varepsilon)$ and thus a solution
to the ICP instance $W$.
%\qed
\end{proof}

% -------------------------------------------------------------------------------------------------

Let us introduce several notations which will be useful for the analysis of 
cancellations that may occur in the construction. We shall define four 
`types' of parts, $A, B, C, D$ where type $A$ parts use alphabet 
$\fgr(\Gamma_1 \cup \Gamma_B) \times \fgr(\Gamma_1 \cup \Gamma_B)$, 
type $B$ parts use $\fgr(\Gamma_2 \cup \Gamma_B) \times \fgr(\Gamma_2 \cup \Gamma_B)$, 
type $C$ parts use $\fgr(\Gamma_3 \cup \Gamma_B) \times \fgr(\Gamma_3 \cup \Gamma_B)$ 
and type $D$ parts use $\fgr(\Gamma_4 \cup \Gamma_B) \times \fgr(\Gamma_4 \cup \Gamma_B)$ 
as in Figure~\ref{fig_1}. A cycle thus has a decomposition which is a permutation of $ABCD$.

We shall now define a function $\zeta: W^* \to \mathbb{N}$. Given any product 
$Y \in W^*$ with the decomposition by parts $Y = Y_1 Y_2 \cdots Y_k$, we first define the 
set of pairs of words $\{Z_1, Z_2, \ldots, Z_k\}$ where $Z_i$ is a
pair of words constructed from $Y_i$ where we exclude the initial and
final letters (from $\Gamma_B$) in each pair of words in $Y_i$. 
We let $\zeta(Y)$ denote the sum 
of non-identity words from $\{Z_1, Z_2, \ldots, Z_k\}$. Note that 
$Z_i \in \fgr(\Gamma_j \cup \Gamma_B) \times \fgr(\Gamma_j \cup \Gamma_B)$ 
for some $1 \leq j \leq 4$.

Thus for a single cycle $Q$, $0 \leq \zeta(Q) \leq 10$ since it can be
decomposed to a maximum of $5$ parts. If the first and second words
in each decomposed part have a non-reducible word in between the
borders, we have that $\zeta(Q)$ is equal to $10$. If $\zeta(Q)$
equals $0$, it means that all words in between the border elements
are reducible to identity.

\begin{lemma}\label{oneCycle}
If there exists no solution to the encoded Restricted PCP instance $P$ then for
any cycle $Q \in W^+$ having decomposition by parts  $Q = X_1 X_2 X_3 X_4 X_5$,
the following holds:
\begin{itemize}
\item $X_r \neq (\varepsilon,\varepsilon)$  for all $r$ where $1 \leq r \leq 5$;
\item $4 \leq \zeta(Q)$;
\item $Q \neq (\varepsilon,
\varepsilon)$, i.e., a single cycle cannot be a solution to the
Identity Correspondence Problem.
\end{itemize}
\end{lemma}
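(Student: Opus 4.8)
The plan is to analyze a single cycle $Q = X_1 X_2 X_3 X_4 X_5$ part by part, exploiting that the four parts $A,B,C,D$ live in disjoint alphabets $\Gamma_1,\ldots,\Gamma_4$ (together with the shared border alphabet $\Gamma_B$). First I would establish the first bullet: for each $r$, $X_r \neq (\varepsilon,\varepsilon)$. This splits into cases by which $W_y$-sets contribute to $X_r$. If $X_r$ contains a pair from some $W_y$ with $y \pbmod 2 \equiv 1$ (i.e.\ from a $\phi_j$- or $\psi_j$-type set), then as already noted in the proof of Lemma~\ref{lem_cycles}, the second word carries a nonempty factor over an alphabet $\{a_j,b_j\}$ (or its inverses) with no inverse letters available, so the second word of $X_r$ cannot reduce to $\varepsilon$. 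If $X_r$ consists of a single ``connector'' pair from $W_0,W_2,W_4,\ldots,W_{14}$, then its second word is a single letter $b_j$ or $b_j^{-1}$ surrounded by borders, hence again nonempty. So every part is nontrivial, which incidentally also needs the observation that in a genuine cycle each of the four alphabet-types is represented.

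Next I would prove $\zeta(Q) \geq 4$, which is really the heart of the argument. The quantity $\zeta(Q)$ counts how many of the $4$ or $5$ ``stripped'' parts $Z_i$ (borders removed) are non-identity. Suppose for contradiction that $\zeta(Q) \leq 3$; then at least one of the four alphabet-types, say type $A$ over $\Gamma_1$, has \emph{all} of its occurring stripped parts equal to $(\varepsilon,\varepsilon)$ — in a cycle each type appears once (or, for the repeated type when there are $5$ parts, twice), so if $\zeta(Q)\leq 3$ some type contributes $0$ to the count and hence is entirely reduced to identity after stripping borders. I would then invoke Lemma~\ref{SecondWordsEncoding}: if the stripped second word of that type-$A$ part is $\varepsilon$, then the full (un-stripped) second word had the canonical palindromic form $x_p b_1 \phi_1(z_1)\cdots\phi_1(z_k) b_1^{-1} \psi_1(z_k)\cdots\psi_1(z_1) x_q^{-1}$ for some indices $2 \le z_i \le n-1$, which forces the corresponding first word of that part to encode exactly $u_1 u_{z_1} \cdots u_{z_k} u_n \cdot (v_1 v_{z_1}\cdots v_{z_k} v_n)^{-1}$ under $\delta_1$. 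But the stripped first word is also $\varepsilon$, so $\delta_1$ of that expression is $\varepsilon$, and since $\delta_1$ is injective on $\fgr(\Sigma)$ this says $u_1 u_{z_1}\cdots u_{z_k} u_n = v_1 v_{z_1}\cdots v_{z_k} v_n$ — a solution to the Restricted PCP instance $P$, contradicting the hypothesis. Hence no type can be fully reduced, each of the (at least) four types contributes at least $1$ to $\zeta$, and $\zeta(Q) \geq 4$.

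Finally, the third bullet, $Q \neq (\varepsilon,\varepsilon)$, is then essentially immediate: if $Q = (\varepsilon,\varepsilon)$ then in particular both words of $Q$ are empty; stripping borders only removes matched letters of $\Gamma_B$, and since the four alphabets $\Gamma_1,\ldots,\Gamma_4$ are mutually ``orthogonal'' (the free groups intersect trivially), cancellations across parts can only involve the border letters, not the interior letters. Therefore if $Q$ reduced to the identity, every stripped part $Z_i$ would have to be $(\varepsilon,\varepsilon)$, giving $\zeta(Q)=0$, contradicting $\zeta(Q)\geq 4$. I expect the main obstacle to be making the ``orthogonality'' argument rigorous — i.e.\ carefully justifying that, because distinct $\Gamma_i$'s generate free groups meeting only in $\{\varepsilon\}$ and consecutive parts in a decomposition by parts use different $\Gamma_i$'s, a letter of $\Gamma_i$ interior to one part can never be cancelled by anything outside that part, so that reduction of $Q$ forces reduction within each part separately; the border bookkeeping (which $x_p, x_q$ pair a given part's type to) from Lemma~\ref{lem_cycles} is what underlies this and should be cited precisely.
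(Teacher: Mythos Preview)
Your overall strategy, and your arguments for the second and third bullets, are essentially the paper's. The gap is in your argument for the first bullet.

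You claim that if $X_r$ contains a pair from some $W_y$ with $y$ odd, then the second word of $X_r$ cannot reduce to $\varepsilon$ because the $\phi_j$- (or $\psi_j$-) factor lives in an alphabet ``with no inverse letters available''. That is what Lemma~\ref{lem_cycles} asserts about a product over a \emph{single} odd $W_y$, but it is false for a \emph{full} part. A full type-$A$ part is a product over $W_0\, W_1^{*}\, W_2\, W_3^{*}$, and its stripped second word has the shape
\[
b_1\,\phi_1(z_1)\cdots\phi_1(z_h)\,b_1^{-1}\,\psi_1(z'_1)\cdots\psi_1(z'_l),
\]
which contains both positive letters (from the $\phi_1$'s) and their inverses (from the $\psi_1$'s). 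This word \emph{does} reduce to $\varepsilon$ in the palindromic case $h=l$, $z'_i = z_{h+1-i}$ --- precisely the situation Lemma~\ref{SecondWordsEncoding} isolates. Your case split also misses the sub-case where the $W_1^{*}$- and $W_3^{*}$-blocks are empty, so that $X_r$ is the product of the two connectors from $W_0$ and $W_2$ and the stripped second word $b_1 b_1^{-1}$ again vanishes.

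The repair is exactly the mechanism you already invoke for the second bullet, and it is how the paper proves the first bullet directly: for a full part $X_r$, if the stripped second word is $\varepsilon$ then Lemma~\ref{SecondWordsEncoding} forces the palindromic index pattern, whence the stripped first word equals $\delta_j\bigl(v_1^{-1}u_1 u_{z_1}\cdots u_{z_h} u_n v_n^{-1} v_{z_h}^{-1}\cdots v_{z_1}^{-1}\bigr)$; if that too is $\varepsilon$, injectivity of $\delta_j$ yields a solution to $P$, a contradiction. With the first bullet established this way, the second bullet is immediate (each of the at least four parts contributes at least one non-$\varepsilon$ word to $\zeta$), so your separate contradiction argument for $\zeta(Q)\geq 4$ becomes redundant rather than carrying the load. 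Your treatment of the third bullet via alphabet disjointness is fine and matches the paper.
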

\begin{proof}
Let $Q$ be a single cycle of the form~(\ref{wForm1}). Since it is a
cycle, the `border letters' of each pair will all cancel with each
other and thus we may ignore letters from $\fgr(\Gamma_B)$ 
(except for the first and last such border letters).
Let $Q = X_1 X_2 X_3 X_4 X_5$ be its decomposition by parts (thus
$X_5$ can be empty and four of the `parts' use different alphabets).

Let us consider some $X_r$ where $1 \leq r \leq 5$. We will show that $X_r$ 
cannot be equal to $(\varepsilon,\varepsilon)$.

Since $Q$ is a cycle, which has a specific structure, the first word of $X_r$, 
when concatenated, equals $v_{p1}^{-1} u_{p1} u_{pj_1} \cdots u_{pj_h} u_{pn} v_{pn}^{-1}
v_{pk_l}^{-1} \cdots v_{pk_1}^{-1}$ for some $1 \leq p \leq 4$ and
$h,l \geq 0$. If $j_i = k_i$ for all $1 \leq i \leq h$ with $h = l$ 
then this is a correct encoding of the Restricted PCP instance $P$ which we
have assumed has no solution, thus this word does not equal
$\varepsilon$ in this case. Therefore the elements must not be in a
correct sequence if the first word equals $\varepsilon$. In this
case however, the second word will now not equal $\varepsilon$ by
the choice of the morphisms $\phi_i$ and $\psi_i$ as shown in
Lemma~\ref{SecondWordsEncoding}. If we have such an incorrect
ordering then when we multiply the second set of words (since also
each morphism uses a different alphabet) they never equal
$\varepsilon$ which is not difficult to see. 

So assuming that there is no solution to the Restricted PCP instance $P$, for any
part $X_r$, $X_r \neq (\varepsilon, \varepsilon)$, i.e., at least one word in the pairs of
words of each part does not equal $\varepsilon$ (even ignoring initial and final border letters). 
Thus, crucially, if there exists no solution to the encoded Restricted PCP instance $P$, then 
$4 \leq \zeta(ABCD) \leq 8$ for a cycle $ABCD \in W^*$.
%\qed
\end{proof}

% -------------------------------------------------------------------------------------------------

It follows from Lemma~\ref{oneCycle} that the statements of Lemma~\ref{lem_cycles} can be restricted further. 
Lemma~\ref{lem_cycles} asserts that the solution of ICP can be either a single cycle or a pattern that is formed 
by a nested insertion of cycles (including concatenation).
It follows from Lemma~\ref{oneCycle} that if the Restricted PCP instance $P$ does not have a solution, then a 
single cycle cannot be equal to $(\varepsilon,\varepsilon)$. We prove now that any solution to the corresponding 
ICP instance $W$ cannot be in the form of cycle insertion unless the solution is in the form of a concatenation of 
several cycles each of which starts with the same element. 

\begin{lemma}\label{lem_concat}
If there exists no solution to the Restricted PCP instance $P$, any solution to the corresponding ICP instance  
$W$ cannot be in the form of cycle insertion unless the solution is in the form of a concatenation of several cycles  
each of which starts with the same element.
\end{lemma}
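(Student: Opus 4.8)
The plan is to analyze a solution $w \in W^*$ that is built by cycle insertions but is \emph{not} merely a concatenation of cycles starting with a common element, and derive a contradiction. First I would set up the combinatorial structure: by Lemma~\ref{lem_cycles}, $w$ decomposes into cycles that are either concatenated or nested. If $w$ is a single cycle we are done by Lemma~\ref{oneCycle}, so assume at least one genuine nesting occurs, i.e.\ there is an innermost inserted cycle $Q$ sitting strictly inside an outer cycle $Q'$. The key observation to exploit is that whenever a cycle $Q$ is inserted into another cycle at some position, the insertion point must lie between two consecutive parts of the outer cycle, and those two parts together with all of $Q$ must still allow the border letters to cancel; this forces $Q$ to begin (and end) with the \emph{same} border letter $x_i$ as the part of $Q'$ immediately following the insertion point.

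Next I would quantify the ``damage'' using the function $\zeta$. For the innermost cycle $Q$ (which contains no further insertions), Lemma~\ref{oneCycle} gives $\zeta(Q)\geq 4$: at least four of its parts have a non-reducible word strictly between the borders. The crucial point is that these non-identity words of $Q$ live in alphabets $\Gamma_1,\ldots,\Gamma_4$ — the same alphabets used by the outer cycle — so they cannot simply vanish; they must be cancelled by adjacent material. But $Q$ was inserted as a \emph{contiguous} block between two parts of the outer structure, and each part of a cycle is a product of elements all drawn from a single $W_i$ (hence a single alphabet $\Gamma_j$ together with $\Gamma_B$). I would argue that the non-identity $\Gamma_j$-content produced inside $Q$ can only be cancelled against $\Gamma_j$-content appearing immediately to its left or right in $w$; since the borders force the surrounding context to be parts of the outer cycle using a \emph{cyclically adjacent} alphabet (not the same one, by the decomposition-by-parts definition), this cancellation is impossible unless $Q$'s boundary parts use the same alphabet as the neighboring outer part — which, combined with the border-letter constraint above, pins down that the insertion is ``degenerate'': $Q$ must start with exactly the element that the outer cycle's next part starts with, and in fact the whole configuration collapses to a concatenation rather than a true nesting.

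The main obstacle, and where I would spend the most care, is making the cancellation argument across the insertion boundary fully rigorous: one must track, for a product of the form $\alpha \, Q \, \beta$ where $\alpha$ ends a part of the outer cycle and $\beta$ begins the next part, exactly which letters of the first words and which letters of the second words can interact. Because the first words carry the PCP encoding in $\Gamma_j$ and the second words carry the $\phi_j,\psi_j$ index encoding (also in $\Gamma_j$), and because Lemma~\ref{SecondWordsEncoding} already constrains how a part's second word can reduce, I would handle the first-word and second-word reductions separately, using Lemma~\ref{oneCycle} on $Q$ to guarantee non-reducibility in one of the two coordinates for at least four of $Q$'s parts, and then checking that no legal surrounding context absorbs all of these. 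The induction is then completed by peeling off the innermost cycle: once we know every nesting is degenerate in the above sense, an outside-in induction on the nesting depth shows the only surviving configurations are concatenations of cycles sharing a common starting element, which is exactly the claimed dichotomy. I would close by remarking that this is precisely the step where three alphabets would fail — with only three colours the cyclically-adjacent alphabet of a neighboring part could coincide with one of $Q$'s alphabets, permitting exactly the pathological cross-cancellation we rule out here.
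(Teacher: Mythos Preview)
Your proposal takes a substantially different route from the paper, and the central step contains a genuine gap.

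The paper's proof is a one-line conjugation trick followed by a border-letter bookkeeping argument. One writes the solution as $LQR$ with $Q$ a cycle; since $LQR = (\varepsilon,\varepsilon)$ in a group, the cyclic conjugate satisfies $QRL = (\varepsilon,\varepsilon)$ as well. Lemma~\ref{oneCycle} says the reduced inner content $q$ of $Q$ is non-trivial and lies in $\fgr(\Gamma')\times\fgr(\Gamma')$ (no border letters), so it must be killed by the reduced inner content of $RL$; matching border letters then forces $RL$ to be a concatenation of cycles beginning with the same border $x_i$ as $Q$. A short factorisation $Q=Q_1Q_2$ at the border $x_k$ then rewrites $LQR$ itself as a concatenation of cycles starting with $x_k$. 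There is no induction on nesting depth, no appeal to $\zeta$, and no alphabet-adjacency argument.

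Your argument, by contrast, hinges on the claim that when an innermost cycle $Q$ is inserted between two parts of the outer cycle, ``the surrounding context [uses] a \emph{cyclically adjacent} alphabet (not the same one, by the decomposition-by-parts definition)'', and hence the non-identity $\Gamma_j$-content of $Q$ cannot be absorbed. This is false as stated. An insertion point is a border pair $x_i^{-1}\cdot x_i$; the element of the outer cycle immediately to the left ends in $x_i^{-1}$ and the first element of $Q$ begins with $x_i$, so they come from sets $W_{i-1}\cup W_i$ and $W_i\cup W_{i+1}$ respectively, which in general share the \emph{same} alphabet $\Gamma_j$ (e.g.\ $W_0,W_1,W_2,W_3$ all use $\Gamma_1$). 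Thus cross-boundary cancellation between $Q$'s first part and the neighbouring outer part is perfectly possible, and likewise on the right. The decomposition-by-parts definition only says consecutive \emph{parts} of a single product use different alphabets; it says nothing about the alphabets appearing on either side of an insertion boundary, because after insertion the new product has a different decomposition by parts in which $Q$'s boundary part and its neighbour may well merge into a single part. Once this alphabet-mismatch claim fails, the rest of your argument --- that $\zeta(Q)\ge 4$ together with ``no legal surrounding context absorbs all of these'' forces degeneracy --- has no foundation: you would need a separate, careful argument controlling how many of $Q$'s non-empty parts can be absorbed across the two insertion boundaries, and that is essentially the content of Lemma~\ref{arbProduct}, not of the present lemma.

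The fix is to abandon the alphabet-adjacency idea here and use the conjugation $LQR\mapsto QRL$ instead; it immediately exposes $Q$ at the left end, where Lemma~\ref{oneCycle} applies directly, and the border letters then do all the work of turning the nested picture into a concatenation.
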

\begin{proof} 
Let us assume that a sequence of indices gives us a solution to ICP in the
form $LQR$, where $L,Q, R \in W^{+}$  and $Q$ is a cycle.
We show that if $LQR=(\varepsilon,\varepsilon)$
then $Q$ is not inserted inside of any other cycles and $LQR$ is a concatenation of cycles
each of which starts with the same element.

If $LQR$ is equal to $(\varepsilon,\varepsilon)$ then $QRL=(\varepsilon,\varepsilon)$.
By $l,r,q$ let us define pairs of words constructed from $L,Q,R$ where we exclude the initial and
final border letters.

Let us assume that the single cycle $Q$ is in the form where it starts and finishes with border letters
$\frac{x_i}{x_i}$ and $\frac{x_i^{-1}}{x_i^{-1}}$,
i.e., $Q=\frac{x_i}{x_i} \cdot q \cdot \frac{x_i^{-1}}{x_i^{-1}}$ where element $q$, when reduced (i.e. removing 
consecutive inverse elements), is in $\fgr(\Gamma ') \times \fgr(\Gamma ')$ where $\Gamma '=\Gamma \setminus \Gamma_B$,
$Q \neq (\varepsilon,\varepsilon)$ and
$LR=\frac{x_k}{x_k} \cdot l \cdot \frac{x_j^{-1}}{x_j^{-1}} \cdot
\frac{x_j}{x_j}   \cdot r \cdot \frac{x_k^{-1}}{x_k^{-1}} $ for some border letters $x_j, x_k \in \Gamma_B$.

Since $q$ cannot be equal to $(\varepsilon,\varepsilon)$ by Lemma~\ref{oneCycle} and $QRL=(\varepsilon,\varepsilon)$
we have that the cycle $Q$ can only be cancelled by a concatenation with $RL$.
Thus the reduced form of $rl$ is in $\fgr(\Gamma ') \times \fgr(\Gamma ')$ and
$RL$ must therefore be in the form of concatenations
of cycles starting with a border symbol $x_i$:
$RL=\frac{x_i}{x_i} \cdot r \cdot \frac{x_k^{-1}}{x_k^{-1}} \cdot \frac{x_k}{x_k}
\cdot l \cdot \frac{x_i^{-1}}{x_i^{-1}}$. We see that $QRL$ is therefore a concatenation of cycles.

Since the cycle $Q$ can be factorized into two parts $Q_1, Q_2$ separated by border letters $x_k,x_k^{-1}$,
i.e. $Q=Q_1 Q_2 = \frac{x_i}{x_i} \cdot q_1 \cdot \frac{x_k^{-1}}{x_k^{-1}} \cdot
\frac{x_k}{x_k}  \cdot q_2 \cdot \frac{x_i^{-1}}{x_i^{-1}}$,
we have that 
$$\begin{array}{rl}LQR & = \frac{x_k}{x_k} \cdot l \cdot \frac{x_i^{-1}}{x_i^{-1}} \cdot
\frac{x_i}{x_i} \cdot q \cdot
\frac{x_i^{-1}}{x_i^{-1}} \cdot \frac{x_i}{x_i} \cdot r \cdot
 \frac{x_k^{-1}}{x_k^{-1}} \\ &
= \frac{x_k}{x_k} \cdot l \cdot \frac{x_i^{-1}}{x_i^{-1}} \cdot \frac{x_i}{x_i} \cdot q_1
\cdot  \frac{x_k^{-1}}{x_k^{-1}} \cdot   \frac{x_k}{x_k}  \cdot q_2 \cdot
\frac{x_i^{-1}}{x_i^{-1}} \cdot x_i \cdot r \cdot  \frac{x_k^{-1}}{x_k^{-1}} \\ &
=  \frac{x_k}{x_k} \cdot l \cdot  q_1  \cdot \frac{x_k^{-1}}{x_k^{-1}} \cdot
\frac{x_k}{x_k}  \cdot q_2 \cdot r \cdot \frac{x_k^{-1}}{x_k^{-1}}.
\end{array}$$

Thus $LQR$ is in the form of concatenation of cycles starting from a border letter $x_k$ as required.
\end{proof}

In the next lemma, we show that if the encoded Restricted PCP instance $P$ has no solution, 
then a concatenation of cycles also cannot form a solution. 

\begin{lemma}\label{arbProduct}
Given an instance of the Identity Correspondence Problem $W$
encoding an instance $P$ of Restricted Post's Correspondence
Problem, if there exists no solution to $P$ then for any product $X
\in W^+$, it holds that $X \neq (\varepsilon, \varepsilon)$, i.e.,
if there is no solution to $P$, there is no solution to $W$.
\end{lemma}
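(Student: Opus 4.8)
The strategy is to combine Lemmas~\ref{lem_cycles}, \ref{oneCycle}, and \ref{lem_concat} to reduce the general case to the case of a concatenation of cycles all starting with the same border letter, and then to derive a contradiction in that remaining case. First I would invoke Lemma~\ref{lem_cycles} to assert that any putative solution $X \in W^+$ with $X = (\varepsilon,\varepsilon)$ must be either a single cycle or a pattern generated by cycle insertions. Lemma~\ref{oneCycle} immediately disposes of the single-cycle case (under the hypothesis that $P$ has no solution). So I may assume $X$ is a nontrivial pattern of nested/concatenated cycles. Applying Lemma~\ref{lem_concat} repeatedly — peeling off innermost inserted cycles one at a time — I would argue that $X$ can be rewritten (as the same element of the free group, via the same border-letter cancellations used in Lemma~\ref{lem_concat}) as a \emph{pure concatenation} of cycles $X = Q_1 Q_2 \cdots Q_N$ with $N \geq 1$, where every $Q_i$ begins (and hence ends) with the same pair of border letters $\frac{x_i}{x_i}$, $\frac{x_i^{-1}}{x_i^{-1}}$ for a common index. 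This is the step I expect to require the most care: I must make sure the induction on the nesting depth is well-founded and that at each stage the hypotheses of Lemma~\ref{lem_concat} genuinely apply (the inner object really is a cycle, and the surrounding context really has the border structure $L R$ assumed there).

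So it remains to show that a concatenation $X = Q_1 \cdots Q_N$ of cycles, each sharing the same starting border letter, cannot equal $(\varepsilon,\varepsilon)$ when $P$ has no solution. Here I would look at the $\zeta$ function. Writing the decomposition by parts of the whole product and tracking which non-identity "central" subwords survive: by Lemma~\ref{oneCycle}, each individual cycle $Q_i$ contributes $4 \le \zeta(Q_i) \le 8$, so in each cycle at least one part of each type $A,B,C,D$ is non-trivial between its borders. Because consecutive cycles all start with the same border letter $x_i$, the point where $Q_i$ ends and $Q_{i+1}$ begins is exactly $\frac{x_i^{-1}}{x_i^{-1}} \cdot \frac{x_i}{x_i}$, which cancels only the border letters and does not merge the central (alphabet $\Gamma'$) contents of a part of $Q_i$ with a part of $Q_{i+1}$ — the border acts as a genuine separator, as already exploited in Lemma~\ref{lem_concat}. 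Consequently the non-identity central words of distinct cycles live in separate "slots" and cannot cancel each other. Since at least one such word is non-identity (indeed at least one per type per cycle, by Lemma~\ref{oneCycle}), the fully reduced form of $X$ retains a non-identity letter from some $\Gamma_p'$, so $X \neq (\varepsilon,\varepsilon)$.

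Finally I would assemble these pieces: if $P$ has no solution, then (i) no single cycle is a solution (Lemma~\ref{oneCycle}); (ii) every pattern of cycle insertions reduces to a same-border concatenation of cycles (Lemma~\ref{lem_concat}, iterated); and (iii) no such concatenation is $(\varepsilon,\varepsilon)$ (the $\zeta$/border-separation argument above). Hence no $X \in W^+$ equals $(\varepsilon,\varepsilon)$, which together with Lemma~\ref{PCPSol} gives that ICP instance $W$ has a solution if and only if Restricted PCP instance $P$ does, completing the reduction. The main obstacle, as noted, is making step (ii) rigorous — formalizing "iterate Lemma~\ref{lem_concat}" as a clean induction on the structure of the cycle-insertion pattern, and confirming that the border-letter bookkeeping in the rewriting is identity-preserving at every step; the role of using \emph{four} alphabets rather than three is precisely to guarantee that in step (iii) the surviving central words in distinct parts cannot conspire to cancel.
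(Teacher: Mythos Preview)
Your overall architecture matches the paper's proof: reduce to a concatenation of cycles with common starting border via Lemmas~\ref{lem_cycles}, \ref{oneCycle}, \ref{lem_concat}, then show such a concatenation cannot equal $(\varepsilon,\varepsilon)$. The problem is in step~(iii), where your argument is too quick and in fact incorrect as stated.

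You claim that at the junction $Q_iQ_{i+1}$ the border pair $\frac{x_i^{-1}}{x_i^{-1}}\cdot\frac{x_i}{x_i}$ ``cancels only the border letters and does not merge the central contents,'' and hence ``the non-identity central words of distinct cycles live in separate slots and cannot cancel each other.'' That is not true. Once the outer borders collapse, the central content of a cycle $Q_i = ABCD$ in (say) the first word is a concatenation $w_A^{(i)}w_B^{(i)}w_C^{(i)}w_D^{(i)}$ with $w_A^{(i)}\in\fgr(\Gamma_1)$, etc. Lemma~\ref{oneCycle} only guarantees that each \emph{pair} $(w^{(i)}_{A},\text{second word of }A)$ is not $(\varepsilon,\varepsilon)$; it does \emph{not} guarantee that each coordinate separately is nonempty. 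If, for instance, $w_B^{(i)}=w_C^{(i)}=w_D^{(i)}=\varepsilon$ in the first word, then $w_A^{(i)}$ sits directly adjacent to $w_A^{(i+1)}$ (both in $\fgr(\Gamma_1)$) and they can cancel. The paper's example immediately after the lemma exhibits exactly this phenomenon: two cycles are concatenated and four parts genuinely cancel, leaving $\zeta=4$ rather than $8$. So ``separate slots'' is an illusion; cross-cycle cancellation does occur, and the question is only \emph{how much} can occur.

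The paper's actual argument for step~(iii) is a counting argument with $\zeta$ that crucially plays the two coordinates off against each other. If the $A$-part of $Q_1$ is to cancel (in the first word) with the $A$-part of $Q_2$, then the first-word entries of the intervening $B,C,D$ parts of $Q_1$ must all be $\varepsilon$; by Lemma~\ref{oneCycle} their second-word entries are therefore non-$\varepsilon$. For those to cancel in the second word one needs the $D$-part of $Q_2$, which forces the second-word entries of the $A,B,C$ parts of $Q_2$ to be $\varepsilon$, hence their first-word entries are non-$\varepsilon$. The upshot is $\zeta(Q_1Q_2)\geq \zeta(Q_1)+\zeta(Q_2)-4\geq 4$, and this inequality iterates. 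That interplay between the two words is exactly what your ``separator'' argument skips, and it is where the four-alphabet design (as opposed to three) actually does its work. Replace your step~(iii) with this $\zeta$-bookkeeping and the proof goes through.
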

\begin{proof}
Let $X = X_1 X_2 \cdots X_k$ be the decomposition by parts of $X$.
Assume $X = (\varepsilon, \varepsilon)$ is a solution to $W$, then
since $P$ has no solution by our assumption, Lemma~\ref{lem_concat} proves that $X$ is a concatenation 
of cycles, each of which begins with the same element.
Note further that if any concatenation of cycles $c_{h_1} \cdots c_{h_l}$ 
(where each cycle starts with the same element) equals $(\varepsilon, \varepsilon)$, 
then this implies that we may cyclically permute the product so that it begins with 
element $w_0$ (at least one $w_0$ element must be present in any product of $W$
giving an identity pair since we require at least one cycle).

Due to the `border constraints', Lemma~\ref{lem_concat} gives us 
a restricted form of sequences that may lead to an identity pair, i.e.,
a type $A$ pair of words must be followed by a type $B$ pair of words 
which must be followed by a type $C$ pair of words etc. This implies that
at least one (cyclic) permutation of $X$ must be of the form $ABCD \cdot ABCD \cdots ABCD$ if it equals
$(\varepsilon, \varepsilon)$ since a single cycle is not a solution
to $W$ by Lemma~\ref{oneCycle}.

Assuming that there is no solution to the Restricted PCP instance $P$, for any
part, $Y_i$, we proved in Lemma~\ref{oneCycle} that $Y_i \neq
(\varepsilon, \varepsilon)$, i.e., at least one word in the pairs of
words of each part does not equal $\varepsilon$ (even excluding
initial and final border letters). Thus, crucially, if there exists
no solution to $P$, then $4 \leq \zeta(ABCD) \leq 8$ for any cycle
$ABCD \in W^*$.

We have that $\zeta(Q_1) \geq 4$ for any cycle $Q_1 \in W^*$. 
We shall now prove that $\zeta(Q_1Q_2) \geq 4$ where $Q_2 \in W^*$
is also a cycle, i.e., by adding another cycle to the
existing one, the number of `empty parts' does not decrease. This
means that we cannot reduce such a product to $(\varepsilon,
\varepsilon)$ and thus if there exists no solution to instance $P$,
there exists no solution to the Identity Correspondence Problem
instance $W$ as required. To see this, consider how many parts can
be cancelled by adding a cycle. For example if the first word of $Q_1$ has an
$A$ part which cancels with the $A$ part of $Q_2$, then
the first word for the $B,C,D$ parts of $Q_1$ must be
$\varepsilon$. But since no part can be equal to $(\varepsilon,
\varepsilon)$ we know that in $Q_1$, the second
word of the $B,C,D$ parts must not equal $\varepsilon$. The only
element that can cancel the second word of $Q_1$ is thus the $D$
part of $Q_2$. However this implies that the second word of the 
$A,B,C$ parts of $Q_2$ all equal $\varepsilon$, thus the first word of 
the $B,C$ parts of $Q_2$ cannot be $\varepsilon$ and we have at least four 
non-$\varepsilon$ parts (the first and second words of the $B,C$ parts).

The same argument holds to cancel any part, thus we cannot reduce
more than $4$ parts by the concatenation of any two cycles. The first
word can cancel at most two parts and the second words can cancel at
most two parts but since we start with eight nonempty parts we
remove only four parts at most leaving four remaining parts. Thus
$\zeta(Q_1Q_2) \geq \zeta(Q_1) + \zeta(Q_2) - 4 \geq 4$ as
required. In fact, it is not difficult to see that this argument
can be applied iteratively and thus $\zeta(Q_1Q_2 \cdots Q_m) \geq 4$ always
holds for any $m \geq 1$. If there is no solution to the Restricted PCP 
instance $P$ then a concatenation of cycles cannot form a solution. 
%\qed
\end{proof}
As an example of this lemma, take the following decomposition by parts (ignoring
`border letters') where $*_i$ is any nonempty word from $\fgr(\Gamma_i)$ (where each $*_i$ 
is understood to be distinct):
$$ ABCD \cdot ABCD =
\left( \frac{\varepsilon}{*_1} \frac{*_2}{\varepsilon} \frac{*_3}{\varepsilon} \frac{*_4}{\varepsilon} \right)
\left( \frac{\varepsilon}{*_1} \frac{\varepsilon}{*_2} \frac{\varepsilon}{*_3} \frac{*_4}{\varepsilon} \right) =
\left( \frac{\varepsilon}{\varepsilon} \frac{*_2}{*_2} \frac{*_3}{*_3} \frac{\varepsilon}{\varepsilon} \right).
$$
Here we cancel four parts in total and we are left with another four
parts. The next $ABCD$ cycle that we concatenate cannot have
$(\varepsilon, \varepsilon)$ for its first two parts however which
will not thus cancel with the last non $\varepsilon$ part above and
thus the next concatenation of $ABCD$ cannot reduce the number of
empty parts by less than four as we showed above, this is the
iterative argument that we apply.

% -------------------------------------------------------------------------------------------------

\begin{theorem}
The Identity Correspondence Problem is undecidable for $m = 8(n-1)$ where
$n$ is the minimal number of pairs for which Restricted PCP is known to be
undecidable (currently $n = 7$).
\end{theorem}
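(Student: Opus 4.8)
The plan is to assemble the final undecidability statement directly from the chain of lemmas already established. First I would fix an arbitrary instance $P = \{(u_1,v_1),\ldots,(u_n,v_n)\}$ of Restricted PCP (Theorem~\ref{GPCP}), and from it construct the instance $W = W_0 \cup W_1 \cup \cdots \cup W_{15} \subseteq \fgr(\Gamma)\times\fgr(\Gamma)$ exactly as in the encoding above. A direct count of the sets $W_i$ shows that $|W| = 8(n-1)$: the eight sets with even index ($W_0, W_2, \ldots, W_{14}$) each contribute a single pair, while the eight sets with odd index ($W_1, W_3, \ldots, W_{15}$) each contribute $n-2$ pairs, so $|W| = 8 + 8(n-2) = 8(n-1)$. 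The construction is plainly effective: given $P$, one writes down $W$ mechanically.

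Next I would establish the correctness of the reduction, i.e. that $W$ has a solution as an instance of ICP (Problem~\ref{IDPCP_prob}) if and only if $P$ has a solution as an instance of Restricted PCP. The forward-to-ICP direction is precisely Lemma~\ref{PCPSol}: from any solution index sequence $2 \le i_1, \ldots, i_k \le n-1$ of $P$ one builds an explicit $w = w_0 w_1 \cdots w_{15} \in W^*$ equal to $(\varepsilon,\varepsilon)$. The converse direction is the contrapositive of Lemma~\ref{arbProduct}: if $P$ has no solution, then no product $X \in W^+$ equals $(\varepsilon,\varepsilon)$, so $W$ has no ICP solution. Combining the two gives the biconditional, hence a many-one reduction from Restricted PCP to ICP.

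The conclusion then follows: since Restricted PCP is undecidable for $n = 7$ by Theorem~\ref{GPCP}, and the reduction above maps a size-$7$ Restricted PCP instance to an ICP instance of size $m = 8(n-1) = 48$, the Identity Correspondence Problem is undecidable, and remains so even when the number of word pairs is fixed at $48$. More generally, for whatever minimal $n$ Restricted PCP is known undecidable, ICP is undecidable with $m = 8(n-1)$ pairs.

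I do not expect a genuine obstacle at this final stage — all the substantive work (the border-letter cycle analysis of Lemma~\ref{lem_cycles}, the palindromic second-word encoding of Lemma~\ref{SecondWordsEncoding}, the single-cycle rigidity of Lemma~\ref{oneCycle}, the cycle-insertion normal form of Lemma~\ref{lem_concat}, and the $\zeta$-monotonicity argument of Lemma~\ref{arbProduct}) has already been done. The only points requiring any care are purely bookkeeping: verifying the cardinality count $|W| = 8(n-1)$ against the definitions of the $W_i$, and checking that $W$ genuinely lies in $\fgr(\Sigma)\times\fgr(\Sigma)$ up to renaming of the finitely many generators in $\Gamma$ (so that the enlarged alphabet $\Gamma$ does not widen the problem, since ICP is stated over a binary group alphabet but is clearly equivalent over any fixed finite group alphabet, and $\Gamma$ itself can be embedded into $\fgr(\{a,b\})$). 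Hence the theorem is immediate from Lemmas~\ref{PCPSol} and~\ref{arbProduct} together with Theorem~\ref{GPCP}.
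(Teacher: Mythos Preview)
Your proposal is correct and follows essentially the same route as the paper: both invoke Lemma~\ref{PCPSol} and Lemma~\ref{arbProduct} for the two directions of the reduction and then appeal to Theorem~\ref{GPCP} with $n=7$ to obtain $m=48$. The one place where the paper is slightly more explicit is the alphabet reduction you flag at the end: rather than merely asserting that $\fgr(\Gamma)$ embeds in $\fgr(\{a,b\})$, the paper writes down the concrete injective morphism $\sigma(y_i)=a^ib a^{-i}$, $\sigma(y_i^{-1})=a^ib^{-1}a^{-i}$ to carry $W$ into $\fgr(\{a,b\})\times\fgr(\{a,b\})$ without changing the number of pairs.
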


\begin{proof}
Given an instance of the Identity Correspondence Problem, $W
\subseteq \fgr(\Gamma) \times \fgr(\Gamma)$ which encodes an
instance of Restricted Post's Correspondence Problem $P$. If there
exists a solution to $P$, Lemma~\ref{PCPSol} shows that there also
exists a solution to $W$. Lemma~\ref{arbProduct} then shows that if there
does not exist a solution to the Restricted PCP instance $P$, there does not 
exist a solution to the Identity Correspondence Problem instance either,
thus proving its undecidability. Since the restricted version of Post's
Correspondence Problem is known to be undecidable for instances of size $7$ by
Theorem~\ref{GPCP}, this implies that ICP is undecidable for $m= 48$ by the construction of $W$.

It remains to prove that we may define the problem over a binary
group alphabet $\{a,b,a^{-1},b^{-1}\}$. This is not difficult
however by a technique which we now outline. Given a group
alphabet $\Sigma_1 = \{y_1, \ldots, y_k, y_1^{-1}, \ldots,
y_k^{-1}\}$ and a binary group alphabet $\Sigma_2 =
\{a,b,a^{-1},b^{-1}\}$. Define $\sigma: \Sigma_1 \to \Sigma_2^*$ by
$\sigma(y_i) = a^iba^{-i}$ and $\sigma(y_i^{-1}) = a^ib^{-1}a^{-i}$ \footnote{Please note 
that this amends a typo in the printed journal article of this paper \cite{BP10}}. It
can be seen that this is an injective morphism (see \cite{BM07}, for example) and
applying iteratively to each letter in each word of $W$ proves the
undecidability of the Identity Correspondence Problem over a binary
group alphabet. 
\end{proof}

\section{Applications of ICP}\label{app_sec}
In this section we will provide a number of new results in 
semigroups using the undecidability of ICP. We first consider the 
``Group Problem'' defined on a semigroup of pairs of words.

\begin{prob} Group Problem - Given an alphabet $\Sigma = \{a,b\}$, is the 
semigroup generated by a finite set of pairs of words 
$P = \{(u_1, v_1), (u_2, v_2), \ldots, (u_m, v_m)\} \subset \fgr(\Sigma)
\times \fgr(\Sigma)$ a group?
\end{prob}

\begin{theorem}
The {\sl Group Problem} is undecidable for $m = 8(n-1)$ pairs of words where
$n$ is the minimal number of pairs for which Restricted PCP is known to be
undecidable (currently $n = 7$).
\end{theorem}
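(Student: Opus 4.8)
The plan is to reduce the Identity Correspondence Problem, already shown undecidable for $m = 8(n-1)$ pairs, to the Group Problem, using the same number of generating pairs. First I would recall the standard equivalence: a finitely generated subsemigroup $S$ of a group $G$ (here $G = \fgr(\Sigma) \times \fgr(\Sigma)$) is itself a group if and only if it contains the identity element $(\varepsilon, \varepsilon)$. The forward direction is trivial. For the converse, suppose $(\varepsilon,\varepsilon) \in S$; then for any generator $(s_i, t_i)$ one writes $(\varepsilon,\varepsilon)$ as a product of generators, and by extracting a suffix (or prefix) of that product one exhibits a two-sided inverse of $(s_i,t_i)$ inside $S$, so every element of $S$ is invertible in $S$ and $S$ is a group. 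I would state this as a short lemma or fold it directly into the proof.

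Next I would take an arbitrary instance $W = \{(s_1,t_1),\ldots,(s_m,t_m)\} \subseteq \fgr(\Gamma) \times \fgr(\Gamma)$ of ICP with $m = 8(n-1)$ and observe that the question ``does $W$ (as a generating set of pairs of words over a group alphabet) generate a semigroup that is a group?'' has the same answer as ``does $W$ have an ICP solution?'', precisely by the equivalence above: a nonempty product of the $(s_{l_i}, t_{l_i})$ equals $(\varepsilon,\varepsilon)$ exactly when $(\varepsilon,\varepsilon)$ lies in the generated semigroup, which by the lemma is exactly when that semigroup is a group. Since ICP is undecidable for $m = 8(n-1)$ (the preceding theorem), the Group Problem is undecidable for the same $m$. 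I should note that the construction already produces pairs over the two-letter group alphabet $\{a,b,a^{-1},b^{-1}\}$ thanks to the injective morphism $\sigma$ at the end of the ICP proof, so the Group Problem as stated over $\Sigma = \{a,b\}$ is the one being hit.

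I do not anticipate a serious obstacle here; the content is essentially the group-membership/invertibility equivalence, and the numerical bound $m = 8(n-1)$ is inherited verbatim from the ICP theorem. The one point requiring a line of care is the converse direction of the equivalence — producing inverses inside $S$ from a single identity-yielding product — and, relatedly, making sure the empty product is not what trivializes things (ICP demands a \emph{nonempty} sequence, matching the fact that the generated semigroup, not monoid, is what must be a group). A second minor point is to remark that there is no loss in phrasing the Group Problem for pairs of words rather than matrices, since this intermediate formulation is exactly what Section~\ref{app_sec} needs before transferring to $4\times 4$ integer matrices via the standard embedding of free groups into $\mathrm{SL}(2,\mathbb{Z})$.

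\begin{proof}
Let $S$ be the subsemigroup of $\fgr(\Sigma) \times \fgr(\Sigma)$ generated by a finite set of pairs of words $P = \{(u_1,v_1),\ldots,(u_m,v_m)\}$. We claim $S$ is a group if and only if $(\varepsilon,\varepsilon) \in S$. The ``only if'' direction is immediate. For the ``if'' direction, suppose $(\varepsilon,\varepsilon) \in S$, so there is a nonempty sequence of indices with $(u_{l_1},v_{l_1}) \cdots (u_{l_k},v_{l_k}) = (\varepsilon,\varepsilon)$. Fix any generator $(u_j,v_j)$ of $P$. Since $(u_{l_1},v_{l_1})\cdots(u_{l_k},v_{l_k}) = (\varepsilon,\varepsilon)$, repeating this product gives arbitrarily long products equal to $(\varepsilon,\varepsilon)$; in particular we may write $(\varepsilon,\varepsilon)$ as a product of generators in which $(u_j,v_j)$ appears, say $(\varepsilon,\varepsilon) = A \cdot (u_j,v_j) \cdot B$ with $A, B$ products of generators (allowing $A$ or $B$ to be a further product of the above identity block so that they are nonempty elements of $S$). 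Then $(u_j,v_j) \cdot (B A) = (BA) \cdot (u_j,v_j) = (\varepsilon,\varepsilon)$ in the group $\fgr(\Sigma)\times\fgr(\Sigma)$, and $BA \in S$, so $(u_j,v_j)$ has a two-sided inverse inside $S$. As the invertible elements of $S$ form a subsemigroup containing all generators, $S$ is a group, proving the claim.

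Now given an instance $W = \{(s_1,t_1),\ldots,(s_m,t_m)\} \subseteq \fgr(\Sigma)\times\fgr(\Sigma)$ of the Identity Correspondence Problem with $m = 8(n-1)$, view $W$ as a generating set of pairs of words and let $S$ be the semigroup it generates. By Problem~\ref{IDPCP_prob}, $W$ has an ICP solution if and only if some nonempty product of the $(s_{l_i},t_{l_i})$ equals $(\varepsilon,\varepsilon)$, i.e.\ if and only if $(\varepsilon,\varepsilon) \in S$, which by the claim above holds if and only if $S$ is a group. Hence an algorithm deciding the Group Problem would decide ICP. Since the Identity Correspondence Problem is undecidable for $m = 8(n-1)$ (and the pairs may be taken over the binary group alphabet $\{a,b,a^{-1},b^{-1}\}$ via the injective morphism $\sigma$ used in the proof of that theorem), the Group Problem is undecidable for $m = 8(n-1)$ pairs of words, which is $48$ for the currently known value $n = 7$.
\end{proof}
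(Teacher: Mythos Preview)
Your claimed equivalence ``$S$ is a group if and only if $(\varepsilon,\varepsilon)\in S$'' is false, and this breaks the proof. The step that fails is the sentence ``in particular we may write $(\varepsilon,\varepsilon)$ as a product of generators in which $(u_j,v_j)$ appears'': nothing forces the fixed generator $(u_j,v_j)$ to occur in \emph{any} identity-yielding product. A concrete counterexample already in a free group: take generators $\{a,\,a^{-1},\,b\}\subset\fgr(\{a,b\})$. The generated semigroup contains $\varepsilon=a\cdot a^{-1}$, but every element of the semigroup, when reduced, has only nonnegative $b$-exponents, so $b^{-1}$ is not in $S$ and $S$ is not a group. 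The same phenomenon occurs for the actual ICP instance $W$: a Restricted PCP solution need not use every index $2\le j\le n-1$, so the corresponding pairs in $W_1,W_3,\ldots$ need not occur in the identity product, and those unused generators will have no inverse in $S$.

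The paper's proof avoids this by proving a different (correct) equivalence: $(\varepsilon,\varepsilon)\in S$ if and only if \emph{some nonempty subset} of the generators generates a group. The forward direction uses exactly your cyclic-permutation idea, but applied only to the generators that actually occur in a given identity product; those generators then each have an inverse in the sub-semigroup they generate. Conversely, any subgroup contains the identity. This yields a Turing reduction from ICP to the Group Problem: decide ICP on $W$ by asking the Group Problem for each of the finitely many nonempty subsets of $W$. Since every such subset has at most $m=8(n-1)$ pairs, undecidability of the Group Problem for $m$ pairs follows. Your argument can be repaired in exactly this way; the cyclic-permutation/inverse-extraction step you wrote is the right mechanism, it just has to be applied to the subset of generators appearing in the identity product rather than to all of $P$.
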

\begin{proof}
Let us assume by contradiction that the Group Problem is decidable
for a semigroup $S$ defined by pairs of words over a group alphabet
and the operation of pairwise concatenation. If the identity element
can be generated by the concatenation of word pairs
$$(u_{i_1},v _{i_1}) (u_{i_2},v _{i_2})\cdot \ldots \cdot (u_{i_k},v _{i_k}) =
(u_{i_1} u_{i_2}\cdot  \ldots \cdot u_{i_k}, v_{i_1} v_{i_2} \cdot
\ldots \cdot v_{i_k}) =(\varepsilon , \varepsilon)$$ then any cyclic
permutation of words in this concatenation is also equal to
$(\varepsilon , \varepsilon)$. Thus every element in the set of all
pairs used in the generation of identity has an inverse element and
this set generates a subgroup. Therefore the Identity Problem can be
solved by checking if some nonempty subset of the original pairs generates
a group. If there is a subset of $S$ which generates a group then
the identity element is in $S$. Otherwise the identity element is
not generated by $S$. %\qed
\end{proof}

It was not previously known whether the Identity Problem for matrix semigroups was
decidable for any dimension greater than two. The Identity Problem in
the two dimensional case for integral matrices was recently proved to be decidable in
\cite{CK05}.

\begin{theorem}\label{IDMatrix}
Given a semigroup $S$ generated by a fixed number $n$ of square four
dimensional integral matrices, determining whether the identity
matrix belongs to $S$ is undecidable. This holds even for $n = 48$.
\end{theorem}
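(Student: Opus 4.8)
The plan is to reduce the Identity Correspondence Problem to the Identity Problem for $4 \times 4$ integral matrix semigroups, using the fact that the free group $\fgr(\{a,b\})$ embeds faithfully into $\mathrm{SL}(2,\mathbb{Z})$. Concretely, I would fix the standard embedding $\rho: \fgr(\{a,b\}) \to \mathrm{SL}(2,\mathbb{Z})$ given on generators by $\rho(a) = \bigl(\begin{smallmatrix} 1 & 2 \\ 0 & 1 \end{smallmatrix}\bigr)$ and $\rho(b) = \bigl(\begin{smallmatrix} 1 & 0 \\ 2 & 1 \end{smallmatrix}\bigr)$, which is a well-known faithful representation (the ``ping-pong'' matrices); note $\rho(w) = I_2$ if and only if $w = \varepsilon$ in the free group. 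Since ICP over an arbitrary group alphabet was shown undecidable — and in particular over the binary group alphabet $\{a,b,a^{-1},b^{-1}\}$ at the end of the previous section — I may start from an ICP instance $\Pi = \{(s_1,t_1),\ldots,(s_m,t_m)\} \subseteq \fgr(\{a,b\}) \times \fgr(\{a,b\})$ with $m = 48$.

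The key construction is to encode each pair of words $(s_i, t_i)$ as a single block-diagonal matrix $M_i = \mathrm{diag}(\rho(s_i), \rho(t_i)) \in \mathbb{Z}^{4 \times 4}$, i.e. the $4\times 4$ matrix with $\rho(s_i)$ in the top-left $2\times 2$ block and $\rho(t_i)$ in the bottom-right $2\times 2$ block. Since $\rho$ is a homomorphism and block-diagonal multiplication acts blockwise, for any index sequence $l_1,\ldots,l_k$ we have
$$
M_{l_1} M_{l_2} \cdots M_{l_k} = \mathrm{diag}\bigl(\rho(s_{l_1} s_{l_2} \cdots s_{l_k}),\, \rho(t_{l_1} t_{l_2} \cdots t_{l_k})\bigr).
$$
This product equals $I_4$ if and only if $\rho(s_{l_1}\cdots s_{l_k}) = I_2$ and $\rho(t_{l_1}\cdots t_{l_k}) = I_2$, which by faithfulness of $\rho$ is equivalent to $s_{l_1}\cdots s_{l_k} = \varepsilon$ and $t_{l_1}\cdots t_{l_k} = \varepsilon$ in $\fgr(\{a,b\})$ — exactly the condition defining a positive ICP instance. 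Hence the semigroup generated by $\{M_1,\ldots,M_{48}\}$ contains $I_4$ if and only if $\Pi$ is a positive instance of ICP. Since ICP is undecidable for $m=48$, the Identity Problem for semigroups generated by $48$ matrices in $\mathbb{Z}^{4\times 4}$ is undecidable, proving the theorem.

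I expect the only genuinely delicate point to be the choice and verification of the free-group embedding: one must cite (or briefly justify via the ping-pong lemma) that $\rho$ as above is faithful, so that $\rho(w)=I_2 \iff w = \varepsilon$ in the free group, and that this remains true after the binary-alphabet reduction used to bring ICP down to the alphabet $\{a,b,a^{-1},b^{-1}\}$. Everything else — that block-diagonal matrices multiply blockwise, that homomorphisms compose, and that the generator size $48$ transfers directly — is routine. I would also remark that the matrices $M_i$ have determinant $1$, which is relevant for the corollaries (e.g. the group-generation question and the $\mathrm{SL}(4,\mathbb{Z})$ formulation) that follow.
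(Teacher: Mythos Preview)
Your proposal is correct and follows essentially the same approach as the paper: the same faithful embedding $\rho$ of the free group into $\mathrm{SL}(2,\mathbb{Z})$ via the matrices $\bigl(\begin{smallmatrix} 1 & 2 \\ 0 & 1 \end{smallmatrix}\bigr)$ and $\bigl(\begin{smallmatrix} 1 & 0 \\ 2 & 1 \end{smallmatrix}\bigr)$, the same direct-sum (block-diagonal) construction $M_i = \rho(s_i) \oplus \rho(t_i)$, and the same equivalence between $I_4 \in S$ and a positive ICP instance. Your additional remarks about the ping-pong lemma and the determinant being $1$ are accurate and indeed relevant to the subsequent corollaries.
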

\begin{proof}
We shall use a standard encoding to embed an instance of the
Identity Correspondence Problem into a set of integral matrices.
Given an instance of ICP say $W \subseteq \Sigma^* \times \Sigma^*$
where $\Sigma = \{a,b,a^{-1},b^{-1}\}$ generates a free group. 
Define the morphism $\rho:
\Sigma^* \to \mathbb{Z}^{2 \times 2}$:
%{\small
$$
\rho(a) = \left(\begin{array}{cc} 1 & 2 \\ 0 & 1 \end{array}\right),
\rho(b) = \left(\begin{array}{cc} 1 & 0 \\ 2 & 1 \end{array}\right),
\rho(a^{-1}) = \left(\begin{array}{cc} 1 & -2 \\ 0 & 1 \end{array}\right),
\rho(b^{-1}) = \left(\begin{array}{cc} 1 & 0 \\ -2 & 1 \end{array}\right).
$$
%}

It is known from the literature that $\rho$ is an injective
homomorphism, i.e., the group generated by $\{\rho(a), \rho(b)\}$ is free, see for example
\cite{LS77}. For each pair of words $(w_1,w_2) \in W$, define the
matrix $A_{w_1,w_2} = \rho(w_1) \oplus \rho(w_2)$ where $\oplus$
denotes the direct sum of two matrices. Let $S$ be a semigroup
generated by $\{A_{w_1,w_2} | (w_1, w_2) \in W\}$. If there exists a
solution to ICP, i.e., $(\varepsilon, \varepsilon) \in W^+$, then we
see that $\rho(\varepsilon) \oplus \rho(\varepsilon) = I_4 \in S$
where $I_4$ is the $4 \times 4$ identity matrix. Otherwise, since
$\rho$ is an injective homomorphism, $I_4 \not\in S$. %\qed
\end{proof}

It follows from the above construction that another open problem
concerning the reachability of any diagonal matrix in a finitely
generated integral matrix semigroup stated in \cite{BCK04} and as
Open Problem~6 in \cite{Har09}, is also undecidable.

\begin{corollary}
Given a finitely generated semigroup of integer matrices $S$, determining whether there
exists any diagonal matrix in $S$ is algorithmically undecidable.
\end{corollary}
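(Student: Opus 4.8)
The plan is to reduce from Theorem~\ref{IDMatrix} (undecidability of the Identity Problem for $4\times 4$ integral matrix semigroups) by a padding construction that converts the question ``does $I_4$ belong to $S$?'' into the question ``does some diagonal matrix belong to $S'$?''. The obvious attempt—just use the same generators—fails because $S$ may already contain many diagonal matrices (for instance products of the $\rho(a^{\pm1})$-type blocks give upper-triangular, not diagonal, matrices, but other combinations could be diagonal by accident), so the reachability-of-a-diagonal question would be trivially affected. Hence I would augment the dimension and encode into the extra coordinates a ``counter'' or ``flag'' that is forced to take a prescribed value precisely when the product equals the identity.

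Concretely, I would take the generators $\{A_{w_1,w_2}\mid (w_1,w_2)\in W\}$ from the proof of Theorem~\ref{IDMatrix} and build $5\times 5$ (or $6\times 6$) matrices of block form $B_i = A_i \oplus (c_i)$ where the new $1\times 1$ block records information that makes any reachable diagonal matrix reveal whether the $4\times4$ part is $I_4$. A cleaner route, avoiding rational entries, is the following: note from the structure of $\rho$ that every nonempty product $\rho(w)$ with $w\neq\varepsilon$ which happens to be diagonal must actually be off-diagonal in at least one coordinate unless $w=\varepsilon$, because $\rho$ is an injective homomorphism into $\mathrm{SL}(2,\mathbb Z)$ and the only diagonal element in the image of a free group on $\rho(a),\rho(b)$ is the identity (a short lemma: a nontrivial reduced word over $\{\rho(a)^{\pm1},\rho(b)^{\pm1}\}$ has a nonzero off-diagonal entry—this follows from the ping-pong argument already invoked for injectivity, tracking which corner is ``large''). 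Thus in the semigroup $S$ generated by the $A_{w_1,w_2}$, a matrix is diagonal if and only if \emph{both} $\rho(w_1)$ and $\rho(w_2)$ are diagonal, i.e. if and only if $w_1=w_2=\varepsilon$, i.e. if and only if it is $I_4$ itself. Therefore ``$S$ contains a diagonal matrix'' is literally equivalent to ``$S$ contains $I_4$'', which is undecidable by Theorem~\ref{IDMatrix}.

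So the key steps, in order, are: (1) recall the matrix semigroup $S$ from Theorem~\ref{IDMatrix} generated by the direct sums $\rho(w_1)\oplus\rho(w_2)$; (2) prove the auxiliary fact that a nontrivial element of $\fgr(\{a,b\})$ is never sent by $\rho$ to a diagonal matrix—this is where one uses the ping-pong / corner-tracking argument underlying injectivity of $\rho$, possibly after replacing the entries $2$ by $3$ or higher so that the classical ``free group of $2\times2$ matrices'' ping-pong lemma applies verbatim; (3) conclude that a block-diagonal matrix $\rho(w_1)\oplus\rho(w_2)\in S$ is diagonal iff $w_1=w_2=\varepsilon$; (4) deduce that $S$ contains a diagonal matrix iff $S$ contains $I_4$, and invoke Theorem~\ref{IDMatrix}.

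The main obstacle is step (2): showing that $\rho$ of a nontrivial reduced word is never diagonal. If one only knows injectivity abstractly, this does not immediately follow, since injectivity merely says distinct words go to distinct matrices, not that nontrivial words avoid the diagonal subgroup. The fix is to use the explicit ping-pong structure: writing a reduced word and tracking, through the alternation of upper- and lower-triangular elementary factors with entries of absolute value $\geq 2$, that the resulting matrix has an entry whose absolute value strictly exceeds the others in a way incompatible with being diagonal. One must be slightly careful about words that are products purely of powers of $\rho(a)$ (these are diagonal only when the exponent is $0$) versus genuinely mixed words; handling the pure-$\rho(a)$ and pure-$\rho(b)$ cases separately and the mixed case by the ping-pong estimate completes the argument. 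If even this is deemed too delicate, a safe alternative is to add one extra coordinate carrying a strictly monotone quantity (e.g. word length mod something, realized via an extra $1\times1$ block equal to a primitive root of unity times a sign) so that diagonality in the padded semigroup forces the length to vanish; but the clean approach above is preferable and is what I would write up.
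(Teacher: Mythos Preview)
Your proposal arrives at exactly the paper's approach: reuse the semigroup $S$ from Theorem~\ref{IDMatrix} unchanged, argue that the only diagonal matrix in $S$ is $I_4$, and conclude. Your initial detour through padding constructions is unnecessary, as you yourself realize midway.

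The only difference is in how you justify the key auxiliary fact that $\rho(w)$ is diagonal only when $w=\varepsilon$. You propose a ping-pong/corner-tracking computation and flag it as ``the main obstacle''; the paper dispatches it in one line by noting that the image of $\rho$ lies in $\mathrm{SL}(2,\mathbb{Z})$, where the only diagonal integer matrices are $\pm I_2$, and these are central---so if $\rho(w)=\pm I_2$ then (by injectivity) $w$ lies in the centre of the free group, hence $w=\varepsilon$ and $\rho(w)=I_2$. This is what the paper means by ``since diagonal matrices commute.'' Your ping-pong argument would also work, but it is heavier machinery than needed; the algebraic observation makes the corollary a genuine two-line consequence of Theorem~\ref{IDMatrix}, with no need to revisit the fine structure of $\rho$ or to contemplate a fallback padding construction.
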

\begin{proof}
This result follows from the proof of Theorem~\ref{IDMatrix}. Note that in that
theorem, the morphism $\rho$ is injective and thus the only
diagonal matrix in the range of $\rho$ is the $2 \times 2$
identity matrix  $I_2$ (corresponding to $\rho(\varepsilon)$), 
since diagonal matrices commute. Clearly then, the only
diagonal matrix in the semigroup $S$ of Theorem~\ref{IDMatrix} is
given by $\rho(\varepsilon) \oplus \rho(\varepsilon) = I_4$ where
$I_4$ is the $4 \times 4$ identity matrix. Since determining if this
matrix is in $S$ was shown to be undecidable, it is also undecidable
to determine if there exists any diagonal matrix in $S$. %\qed
\end{proof}

% -------------------------------------------------------------------------------------------------

\begin{theorem}
Given a finite set of rotations on the $3$-sphere. Determining
whether this set of rotations generates a group is undecidable.
\end{theorem}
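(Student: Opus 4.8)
The plan is to reduce the Identity Problem for finitely generated semigroups of $4\times 4$ integral matrices (Theorem~\ref{IDMatrix}) to the question of whether a finite set of rotations of the $3$-sphere generates a group. The key classical fact is that the group of rotations $SO(4)$ (or more precisely unit quaternions acting on $S^3$) contains a copy of a free group, and in particular $SL(2,\mathbb{Z})$-type free subgroups can be realized by rotations. First I would recall the double cover $S^3 \times S^3 \to SO(4)$ by pairs of unit quaternions $(p,q)$ acting via $x \mapsto p x \bar q$, and the fact that rotations of $S^3$ about a fixed point correspond to this action; equivalently one can use the embedding of $SL(2,\mathbb{C})$ (and hence of the free group generated by the two parabolic matrices $\rho(a),\rho(b)$ from Theorem~\ref{IDMatrix}) into the unit quaternions / $SO(4)$. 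The point is to exhibit an \emph{injective} homomorphism $\tau$ from $\fgr(\Sigma)$ into the rotation group of $S^3$ so that freeness (and hence the ``no accidental identities'' property) is preserved.

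The main steps, in order, are as follows. (1) Take the instance $W \subseteq \fgr(\Sigma)\times\fgr(\Sigma)$ of ICP over a binary group alphabet produced in the undecidability proof of ICP. (2) Fix a free subgroup of the rotation group of $S^3$ on two generators $R_a, R_b$ — for instance two rotations by an angle whose cosine is irrational in a suitable sense (the standard Świerczkowski / Hausdorff construction of free rotation groups), or, more conveniently, transport the free group $\langle \rho(a),\rho(b)\rangle \le SL(2,\mathbb{Z})$ through the isomorphism $SU(2)\cong$ unit quaternions and the homomorphism to $SO(3)$ or $SO(4)$. (3) For each pair $(w_1,w_2)\in W$, form the rotation of $S^3$ acting ``diagonally'': realize $S^3 \subset \mathbb{R}^4 = \mathbb{R}^2\oplus\mathbb{R}^2$ — wait, that is not how $S^3$ splits; instead use that a rotation of $S^3$ is an element of $SO(4)$, and encode the pair $(w_1,w_2)$ as the left-multiplication-by-$\tau(w_1)$ / right-multiplication-by-$\overline{\tau(w_2)}$ quaternion pair, which is exactly a rotation of $S^3$. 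Then a product of these rotations is the identity rotation if and only if $\tau(w_1\cdots) = \pm 1$ and $\tau(w_2\cdots) = \pm 1$ simultaneously — and by injectivity of $\tau$ on the free group this forces $w_1\cdots = w_2\cdots = \varepsilon$, i.e.\ a solution of ICP. (4) Conclude: the finite set of rotations generates a group if and only if some nonempty subproduct equals the identity rotation (by the same ``cyclic permutation gives inverses'' argument used in the Group Problem theorem above), hence if and only if ICP has a solution; since ICP is undecidable, so is this.

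The main obstacle is handling the sign ambiguity $\pm 1$ coming from the two-to-one cover $S^3 \to SO(3)$ (or the kernel $\{(\pm 1,\pm 1)\}$ of $S^3\times S^3 \to SO(4)$): a product of quaternions can equal $-1$ rather than $+1$ while still giving the identity rotation. The fix is routine but must be stated carefully: either (a) observe that $-1$ lies in the \emph{center} and is not in the image of a nontrivial reduced word of the \emph{free} group $\langle \tau(a),\tau(b)\rangle$ when that group is chosen to embed faithfully (choosing generators so that $-1\notin\langle\tau(a),\tau(b)\rangle$, which one can arrange by a careful choice of lift), or (b) pass from the single equation ``identity rotation'' to the pair structure: if $\tau(w_1\cdots)=\epsilon_1$ and $\tau(w_2\cdots)=\epsilon_2$ with $\epsilon_i\in\{\pm1\}$, incorporate an extra coordinate or an extra generator pair in $W$ whose effect pins down the sign, mirroring the ``border letter'' bookkeeping already present in the ICP construction. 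I expect that a clean way around the obstacle is simply to work in $SO(4)$ via the $(p,q)$-action and note that the identity of $SO(4)$ forces $(p,q)\in\{(1,1),(-1,-1)\}$, and then use that the reduction from ICP already passes through $SL(2,\mathbb{Z})$ matrices with a direct sum structure, so one lands on pairs $(\pm 1,\pm 1)$ that are correlated, and only $(1,1)$ and $(-1,-1)$ occur — both of which correspond to the trivial reduced word, preserving the reduction. With this sign issue dispatched, the remainder is a direct transcription of the proof of Theorem~\ref{IDMatrix} with $\rho$ replaced by the quaternionic/rotation embedding $\tau$.
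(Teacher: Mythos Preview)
Your approach is essentially the paper's: encode each ICP pair $(w_1,w_2)$ as the $SO(4)$-rotation $x\mapsto \tau(w_1)\,x\,\overline{\tau(w_2)}$ coming from a pair of unit quaternions, where $\tau$ is an injective homomorphism from the free group on two generators into the unit quaternions, and then invoke the already-proved equivalence between ``generates a group'' and ``contains the identity'' via cyclic permutation. The paper does exactly this, using the explicit generators $\xi(a)=\tfrac{3}{5}+\tfrac{4}{5}i$ and $\xi(b)=\tfrac{3}{5}+\tfrac{4}{5}j$ (shown in \cite{BP08quat} to generate a free group of quaternions), and it reduces directly from the Group Problem for word pairs rather than detouring through the $4\times 4$ matrix theorem.

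Your worry about the $\pm 1$ kernel of $S^3\times S^3\to SO(4)$ is well spotted and the paper's proof does not address it explicitly, but it dissolves cleanly: a free group is torsion-free, and $-1$ has order $2$, so $-1$ is not in the image of $\tau$. Hence the identity rotation forces $(\tau(w_1\cdots),\tau(w_2\cdots))\in\{(1,1),(-1,-1)\}$, and the second option is impossible, giving $(w_1\cdots,w_2\cdots)=(\varepsilon,\varepsilon)$ as required. No extra bookkeeping or modification of the ICP instance is needed.
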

\begin{proof}
We shall use the notation $\mathbb{H}$ to denote the set of quaternions. More details of quaternions used in 
this theorem can be found in \cite{BP08quat}. The set of all unit quaternions forms the unit
$3$-sphere and any pair of unit quaternions $a$ and $b$ can
represent a rotation in $4$ dimensional space. A point
$x=(x_1,x_2,x_3,x_4)$ on the $3$-sphere may be represented by a quaternion
$q_x=x_1+x_2i+x_3j+x_4k$ and rotated using the operation: $a q_x b^{-1}$.
This gives a quaternion $q'_x=x'_1+x'_2i+x'_3j+x'_4k$ representing the rotated point
$x'=(x'_1,x'_2,x'_3,x'_4)$.

We can define a morphism $\xi$ from a group alphabet to unitary quaternions:
$$ \xi(a) = \frac{3}{5}  + \frac{4}{5} \cdot i; \xi(b) = \frac{3}{5}  + \frac{4}{5} \cdot j.$$

It was proven in \cite{BP08quat} that $\xi$ is an injective
homomorphism. We may thus convert pairs of words from an instance of the
Identity Correspondence Problem into pairs of quaternions
$\{(a_1,b_1),\ldots ,(a_n,b_n)\} \subseteq \mathbb{H} \times \mathbb{H}$. Therefore we reduce the Group Problem
for pairs of words over a group alphabet to the question of whether a
finite set of rotations, $\{(a_1,b_1),\ldots ,(a_n,b_n)\}$,
represented by pairs of quaternions, generates a group. %\qed
\end{proof}

\section{Conclusion}

In this paper we introduced the Identity Correspondence Problem,
proved that it is undecidable and applied it to answer  long
standing open problems in matrix semigroups. In particular, we proved that the membership 
problem for the identity matrix in $4 \times 4$ integral matrix semigroups is undecidable. The identity matrix membership
problem for $2 \times 2$ matrix semigroups was shown to be decidable in \cite{CK05}, but the problem in dimension $3$ remains open.
We believe that the Identity Correspondence Problem will be useful in identifying new
areas of undecidable problems not only related to matrix problems
but also to computational questions in abstract algebra, logic and
combinatorics on words.

%The concatenation of words over a group alphabet described in the main theorem can be interpreted as an operation with a blind pushdown stack.  For
%example let us consider a blind pushdown stack where stack symbols are over a group alphabet, the testing of a head symbol is not available and the
%popping of a symbol $a$ from a stack is simulated by a pushing of  $a^{-1}$ to a stack. We may assume that two consecutive inverse elements will
%cancel each other. So in the case where a head symbol is $a$ the concatenation of pushed symbol $a^{-1}$ will correspond to correct popping operation.
%On the other hand in case where the head symbol is not equal to $a$ this will allow to fire a transition that pop a symbol $a$ by leaving as a head
%symbol $a^{-1}$. The described properties will allow to remove the constraint that we can only pop (or read) an existing symbol from the stack.

%Another interesting aspect of the result is that if the identity pair is reachable then it can be represented by an infinite number of different
%concatenation traces. This property is diametrically opposite to the freeness where every element should be reached in a unique way.

\vskip 0.4cm

\noindent{\bf Acknowledgements} - We would like to thank Prof. Tero
Harju for useful discussions concerning this problem and the anonymous referees for their careful checking of this manuscript.

% --------------------------------------------------------------------------
% --------------------------------------------------------------------------

\end{document}